\documentclass[11pt]{article}
\usepackage{graphicx} 

\usepackage{geometry}
\usepackage{inputenc}
\usepackage{enumerate}
\usepackage{amssymb}
\usepackage{amsmath}
\usepackage{mathrsfs}
\usepackage{amsthm}
\usepackage{tikz-cd}
\usepackage{tikz-cd,wrapfig,amscd}
\usepackage{setspace}
\usepackage{comment}
\usetikzlibrary{decorations.pathreplacing, calc}
\usepackage{hyperref} 

\theoremstyle{definition}
\newtheorem{definition}{Definition}[section]

\newtheorem{remark}[definition]{Remark}
\theoremstyle{plain}
\newtheorem{theorem}[definition]{Theorem}
\newtheorem{lemma}[definition]{Lemma}
\newtheorem{proposition}[definition]{Proposition}

\newtheorem{question}[definition]{Question}
\newtheorem{notation}[definition]{Notation}

\DeclareMathOperator{\supp}{\text{supp}}

\DeclareMathOperator{\ad}{\text{ad}}

\title{The quantitative coarse Baum--Connes conjecture for free products}
\author{Jintao Deng\thanks{Department of Mathematics, Shanghai University of Finance and Economics, Shanghai, China, 200433} \and Ryo Toyota\thanks{Department of Mathematics, Texas A\&M University, College Station, TX 77843, USA}}
\date{}

\begin{document}

\maketitle

 \begin{abstract}
    Let $X$ and $Y$ be uniformly locally finite metric spaces. In this paper, we prove the quantitative coarse Baum--Connes conjecture for the free product $X*Y$ under the assumption that the conjecture holds for both $X$ and $Y$.
 \end{abstract}

\onehalfspacing

\section{Introduction}

Higher index theory is a generalization of the Atiyah--Singer index theorem to noncompact settings via $K$-theory of $C^*$-algebras. One of the central problems in this area is the coarse Baum--Connes conjecture, which provides a framework for computing higher indices of elliptic operators on noncompact manifolds.
The coarse Baum--Connes conjecture asserts that a certain coarse assembly map 
$$\mu: KX_*(X)\to K_*(C^*(X))$$
is an isomorphism for a metric space $X$ with bounded geometry. This conjecture has many significant applications in analysis, geometry, and topology, including the Novikov conjecture on the homotopy invariance of higher signatures and the Gromov--Lawson conjecture on positive scalar curvature. It has been verified for a large class of metric spaces, including spaces with finite asymptotic dimension~\cite{Yu1998NovikoforFiniteAsymptoticDimension} and spaces admitting a coarse embedding into Hilbert space~\cite{Yu2000CoarseBaumConnesforSpacesUniformlyEmbeddable}.

For any two groups $G$ and $H$, the free product $G*H$ provides a canonical way to construct a new group from them. Several coarse geometric properties are known to be preserved under free products. 
Bell and Dranishnikov~\cite{BellDranishnikov01Asymptotic} showed that the asymptotic dimension of $G*H$ is controlled by that of $G$ and $H$. 
Chen, Dadarlat, Guentner, and Yu~\cite{ChenDadarlatGuentnerYu03Uniform} proved that $G*H$ has property~(A) (resp. admits a coarse embedding into Hilbert space) if and only if $G$ and $H$ do.

In this paper, we study the higher index theory for free products of a pair of uniformly locally finite metric spaces. As a generalization of free products for groups, Fukaya--Matsuka \cite{Fukaya-Matsuka:FreeProductOfcoarselyConvex} introduced the notion for metric spaces and proved that coarse convexity is preserved by free products. In \cite{Wang-Yao:Coarse-geometry-free-product}, Q. Wang and J. Yao studied the coarse geometry preserved by free product of metric spaces. 

All those large-scale properties are sufficient conditions for the coarse Baum--Connes conjecture. 
These results naturally lead to the following question regarding the coarse Baum--Connes conjecture:
\begin{question}\label{question}
Let $X$ and $Y$ be uniformly locally finite metric spaces. If $X$ and $Y$ satisfy the coarse Baum--Connes conjecture, does $X*Y$ also satisfy it?
\end{question}

For a finitely generated group $G$, the coarse Baum--Connes conjecture for the space $G$ equipped with a word metric is equivalent to the Baum--Connes conjecture for $G$ with coefficients in $\ell^{\infty}(G, \mathcal{K})$, where $\mathcal{K}$ is the algebra of compact operators on an infinite-dimensional separable Hilbert space. Regarding the Baum–Connes conjecture with coefficients, Oyono-Oyono~\cite{OO:BC-group-acting-on-trees} proved that for a group 
$G$ acting on a tree, the Baum--Connes conjecture with coefficients holds if and only if it holds for all vertex stabilizers. Tu~\cite{Tu:BCC-group-acting-on-trees} studied the conjecture for groups acting on trees under certain conditions by constructing gamma elements. However, the cases of groups admitting actions on trees typically rely on the Pimsner–Voiculescu six-term exact sequence, and this method can not be applied to general metric spaces.

To study Question \ref{question} for metric spaces, we consider the quantitative coarse Baum--Connes conjecture which is a stronger version of coarse Baum--Connes conjecture. The following is our main result.  

\begin{theorem}\label{main theorem}
    If $X$ and $Y$ are uniformly locally finite metric spaces that satisfy the quantitative coarse Baum--Connes conjecture, then their free product $X*Y$ also satisfies the quantitative coarse Baum--Connes conjecture.
\end{theorem}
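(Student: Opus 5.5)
We prove the theorem by cutting $X*Y$ into pieces built from copies of $X$ and $Y$ and running a controlled Mayer--Vietoris argument, following the Oyono-Oyono--Yu framework of quantitative (controlled) $K$-theory. Recall that the quantitative coarse Baum--Connes conjecture for a space $Z$ asks, for each $r>0$, $0<\varepsilon<1/4$ and Rips parameter $d$, for $d'\ge d$ and $r'\ge r$ such that two things hold. First, the controlled assembly map from $K_*(P_d(Z))$ to $K^{\varepsilon,r}_*(C^*(Z))$ is quantitatively surjective after enlarging to $(\varepsilon', r')$. Second, any class in $K_*(P_d(Z))$ killed in $K^{\varepsilon,r}_*$ is killed in $K_*(P_{d'}(Z))$. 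The key point is that $d',r'$ depend only on $(d,r,\varepsilon)$ and not on the particular class. This uniformity is what lets the property pass to unions and to infinite families, which ordinary coarse Baum--Connes does not allow.

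\textbf{Step 1: geometric decomposition.} Elements of $X*Y$ are reduced words, and the free product metric makes $X*Y$ tree-like. It is a tree of copies of $X$ and $Y$, glued along single base points, with the Bass--Serre-type tree $T$ recording adjacency. For each $R$ we split the vertices of $T$ by the parity of their word length, or by distance to the root modulo a large even number. This gives a cover $X*Y = A\cup B$ with the following structure.
\begin{itemize}
\item $A$ is a coarsely disjoint union (gaps $\ge R$) of subsets coarsely equivalent to $X$ or $Y$, thickened by $R$ near the gluing points.
\item $B$ has the same structure.
\item $A\cap B$ is, up to uniform coarse equivalence, a disjoint union of uniformly bounded pieces, namely $R$-neighbourhoods of gluing points.
\end{itemize}
Uniform local finiteness guarantees that all these coarse equivalences are uniform.

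\textbf{Step 2: uniform families.} We show that the quantitative conjecture for $X$ and for $Y$ implies it for any separated disjoint union of uniformly coarsely equivalent copies of $X$ and $Y$. This is exactly where the class-independent choice of $d',r'$ is used: by finite propagation, both the Roe algebra and the Rips complex decompose as products of their pieces.

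Uniformly bounded families satisfy the conjecture trivially, with all parameters uniform. We also need a coarse-invariance statement: the quantitative conjecture passes along uniform coarse equivalences, with controlled loss of parameters. Thickening a piece by $R$ is such an equivalence.

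\textbf{Step 3: controlled Mayer--Vietoris and five lemma.} We use the controlled Mayer--Vietoris sequence for $R$-excisive couples $(A,B)$ in quantitative $K$-theory of Roe algebras. This is an Oyono-Oyono--Yu result; if the paper does not establish it, we would reprove it for subspaces of Roe algebras.

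We then compare it with the topological Mayer--Vietoris sequence for $P_d(A)$, $P_d(B)$ and $P_d(A\cap B)$ inside $P_d(X*Y)$. The comparison uses a controlled five lemma: if the maps for $A$, $B$ and $A\cap B$ are quantitatively isomorphic, so is the map for $A\cup B$.

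\textbf{Main obstacle.} The hardest part is the parameter bookkeeping, which comes in two layers. On the Roe-algebra side, the boundary map in controlled Mayer--Vietoris increases propagation and $\varepsilon$. We must therefore choose $R$ depending on $(r,\varepsilon)$ large enough that the couple remains excisive at the enlarged scales produced by the five lemma. On the topological side, Rips complexes of $A$ and $B$ do not cover $P_d(X*Y)$ unless $R\gg d$.

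So $R$ must be fixed only after $d$, $r$ and $\varepsilon$ are given, with the decomposition of Step 1 redone for each $R$. The required uniformity in Step 2 holds for every $R$, with constants independent of $R$, because the tree structure makes the pieces of $A$ and $B$ coarsely equivalent to $X$ and $Y$ uniformly in $R$. Checking this independence carefully is the point we expect to need the most care.
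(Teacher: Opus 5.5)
\textbf{The gap is in Step 1.} The decomposition you describe does not exist in the form you need, and neither of your two suggested splittings of $T$ produces it. In $X*Y$ a coset is glued to its children at \emph{every} one of its points: $wxY$ hangs off $wX$ at $wx$ for every $x\in X$, and $wxyX$ hangs off $wxY$ at $wxy$. So two cosets of the same type at $T$-distance $2$, such as $wX$ and $wxyX$, are at distance of order $1+\ell(y)$ in $X*Y$. This stays bounded as $R\to\infty$ whenever $y$ is close to $e_Y$.

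Consequently, a parity split never gives an $R$-separated family of single (thickened) cosets. Moreover, ``thickening by $R$ near the gluing points'' thickens the whole coset and pulls in $R$-balls of all children and grandchildren. The attaching regions therefore chain together into unbounded sets, not uniformly bounded overlaps.

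If instead you split by depth modulo $2R$ (annuli of $T$, cut into branches), separation does hold, because $\pi$ is a contraction. But then each piece is $\pi^{-1}(T^w_{n+m}\setminus T^w_n)$: roughly $R$ generations of infinitely many copies of $X$ and $Y$. Such a piece is not coarsely equivalent to $X$ or $Y$. Also, neighbouring pieces meet along an entire generation of cosets, not a bounded set. Your Step 2 (separated unions of copies, bounded families, coarse invariance) covers none of these pieces, so the Mayer--Vietoris step in Step 3 has no input.

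\textbf{The missing idea} is the inductive analysis of these tree-shaped pieces, which is the heart of the paper (Lemmas~\ref{subtree} and~\ref{tree setminus tree}).
\begin{enumerate}[(1)]
\item One shows that $\{\pi^{-1}(T^w_n)\}_w$ satisfies the conjecture uniformly, by induction on $n$. Peel off the last generation as leaf cosets minus the $R$-balls around their base points. These pieces \emph{are} $2R$-separated and uniformly coarsely equivalent to $X$ or $Y$. The remainder is an $R$-neighbourhood of the previous depth, handled by the inductive hypothesis, and the overlaps are uniformly bounded annuli.
\item A second induction handles $\pi^{-1}(T^w_{m+n}\setminus T^w_n)$.
\item Only then does the annular decomposition of $T$ go through. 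Its intersections are single generations of cosets, which are again covered by the lemma.
\end{enumerate}
Each of these is one application of Lemma~\ref{one step decomposition}, iterated a number of times depending on $R$, with uniform control at each stage. The paper phrases each step as the vanishing of the maps $K^{\varepsilon,r}_*(C^*_{L,0}(P_{k}(\cdot)))\to K^{\varepsilon,r'}_*(C^*_{L,0}(P_{k'}(\cdot)))$, using only the controlled boundary map. This avoids the controlled five lemma and the Rips-complex Mayer--Vietoris comparison you planned; that part of your plan is a legitimate alternative but not the issue.

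Finally, the uniformity in $R$ you flag as the main obstacle is neither true nor needed. For instance, $Z\setminus B(e_Z;R)\simeq Z$ has $R$-dependent distortion. Only uniformity over the family for a fixed $R$ matters.
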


To study the free product of metric spaces, we apply the quantitative $K$-theory introduced by Yu \cite{Yu1998NovikoforFiniteAsymptoticDimension}.
In ~\cite{Oyono-Oyono-Yu-On-Quantitative-K-theory, Oyono-Oyono-Yu2019QuantitativeK-theoryandKunnethformula}, Oyono-Oyono and Yu systematically developed quantitative $K$-theory. Guentner, Willett, and Yu~\cite{GuentnerWillettYu2024DynamicalComplexity} introduced dynamical complexity and applied those techniques to the Baum--Connes conjecture for dynamical systems. Since these approaches establish that assembly maps are isomorphisms via quantitative assembly maps, it is natural to formulate quantitative versions of the assembly map (cf. \cite[Section 6]{Oyono-Oyono-Yu-On-Quantitative-K-theory}, \cite{zhang2024QuantitativeCoarseBaumConnes}, and \cite{OO-Yu:2024Quantitative-Index}).

Our proof is inspired by the notion of geometric complexity (also known as decomposition complexity) introduced in~\cite{GuentnerTesseraYu2012ANotionofGeometricComplexity}. Let $\mathcal{X}$ and $\mathcal{Y}$ be families of metric spaces and let $r>0$. We say that $\mathcal{X}$ is $r$-decomposable over $\mathcal{Y}$ if for every $X\in\mathcal{X}$ there exists a decomposition $X=Y_0\cup Y_1$ such that each $Y_i$ admits a further decomposition
\begin{align*}
Y_i=\bigsqcup_j Y_{i,j},
\end{align*}
where each $Y_{i,j}\in\mathcal{Y}$ and $d(Y_{i,j},Y_{i,j'})>r$ for $j\neq j'$.

For a pair of metric spaces $X$ and $Y$, we exploit the geometric decomposition of the Bass--Serre tree $T$ associated with the free product $X*Y$. However, since $T$ typically has infinite valence, a subset of $X*Y$ corresponding to a bounded subset of $T$ consists of infinitely many copies of $X$ and $Y$ glued together in a highly intricate way. Another difficulty arises from the fact that the coarse Baum--Connes conjecture does not, in general, pass to subspaces. In \cite[Theorem~2.13]{OO-Yu:2024Quantitative-Index}, it is shown that the quantitative coarse Baum--Connes conjecture for a metric family can be recovered from a geometric decomposition provided that all subspaces of the pieces satisfy the conjecture uniformly.
Our main theorem does not assume that subspaces of $X$ or $Y$ satisfy the quantitative coarse Baum--Connes conjecture. To overcome this issue, we construct a decomposition of $X*Y$ that avoids such permanence properties. The detailed analysis in Lemmas~\ref{subtree} and~\ref{tree setminus tree}, which is specific to free products, plays a key role in the proof.

As an application of Theorem~\ref{main theorem}, we obtain new examples of metric spaces satisfying the coarse Baum--Connes conjecture. Arzhantseva and Tessera~\cite{ArzhantsevaTessera19GroupExtension} constructed a group $G$ which is an extension of groups coarsely embeddable into Hilbert space, but which itself does not admit a coarse embedding. It is known that such an extension satisfies the quantitative coarse Baum--Connes conjecture (Remark~\ref{generality of qcbc}). Let $X$ be any uniformly locally finite metric space satisfying the quantitative coarse Baum--Connes conjecture. By our main theorem, the free product $G*X$ also satisfies the quantitative coarse Baum--Connes conjecture, providing new examples of metric spaces satisfying the coarse Baum--Connes conjecture, especially in cases where $G$ or $X$ does not coarsely embed into Hilbert space.

This paper is organized as follows. In Section 2, we recall basic notions of Roe algebras and localization algebras. In Section 3, we develop techniques for computing quantitative $K$-theory and formulate the quantitative coarse Baum--Connes conjecture. In Section 4, we complete the proof of the main theorem.

\section{The Roe algebras and localization algebras}

In this section, we shall briefly recall the basic concepts of the coarse Baum--Connes conjecture.

Let $Z$ be a proper metric space. Recall that a metric space is said to be proper if every bounded subset is precompact. 

Let $\mathcal{H}$ be an infinite-dimensional, separable Hilbert space. We choose a countable dense subset $Z_0\subset Z$, and consider the Hilbert space $\ell^2(Z_0,\mathcal{H})$. Every element $T\in B(\ell^2(Z_0,\mathcal{H}))$ can be expressed as an $(Z_0\times Z_0)$-matrix $T=(T_{x,y})_{x,y\in Z_0}$. 

\begin{definition}
    Let $T=(T_{x,y})_{x,y\in Z_0}\in B(\ell^2(Z_0,\mathcal{H}))$.
    \begin{enumerate}[(1)]
        \item The propagation of $T$ is defined as  
$${\rm propagation}(T):=\sup\{d(x,y):T_{x,y}\neq 0\}.$$
 $T$ is said to have finite propagation if ${\rm porpagation}(T)<\infty$.
 \item $T$ is said to be locally compact, if for every compact subset $B\subset Z$, the operators $T\cdot\chi_{B}$ and $\chi_{B}\cdot T$ are compact operator on $\ell^2(Z_0,\mathcal{H})$, where $\chi_{B}$ is the multiplication operator by the characteristic function of $B\cap Z_0$.
    \end{enumerate}
\end{definition}

\begin{definition}
The algebraic Roe algebra $\mathbb{C}[Z]$ is defined to the $*$-subalgebra of $B(\ell^2(Z_0,\mathcal{H}))$ consisting of all locally compact operators with finite propagation. The Roe algebra $C^*(Z)$ is defined to be the $C^*$-algebra of the completion of $\mathbb{C}[Z]$ under the operator norm on $\ell^2(Z_0,\mathcal{H})$.
\end{definition} 

The definition of Roe algebras is independent on the choice of dense subset $Z_0\subset Z$ . Moreover, it is invariant under coarse equivalences (c.f.~\cite[Theorem 5.1.15]{Willett-Yu:Higher-index-theory}).  

\begin{definition}
    The algebraic localization algebra of $Z$, denoted by $\mathbb{C}_L[Z]$ is defined to the $*$-algebra of all uniformly bounded, and uniformly continuous functions $f: [0,\infty)\to C^*(Z)$ satisfying
    $$
    {\rm propagation}(f(t))\to 0,~\text{as}~t \to \infty.
    $$
    The localization algebra $C^*_L(Z)$ is defined to be the completion of $\mathbb{C}[Z]$ under the norm 
    $$
    \|f\|:=\sup_{t\in [0,\infty)}\|f(t)\|.
    $$
\end{definition}
The localization algebra was introduced by Yu~\cite{Yu1997LocalizationandCoarseBaumConnes} and he proved that the $K$-theory of the localization algebra $K_*(C^*_L(Z))$ is isomorphic to the $K$-homology groups of $K_*(Z)$ when $Z$ is a simplicial complex and it was strengthen to any locally compact metric space by Qiao and Roe  \cite{QiaoRoe10OnTheLocalization}. A key advantage of the localization algebra is that its $K$-theory is computable.
This is because the algebra encodes local geometric information in a way that
admits cutting-and-pasting arguments on the underlying space.
In particular, one can decompose the space into simpler pieces and recover
the global $K$-theory from these pieces.

There is a natural evaluation-at-zero map $e: C^*_L(Z)\to C^*(Z)$ defined by 
$$e(f)=f(0),~\forall~f\in C^*_L(Z).$$
This map is a $*$-homomorphism, and thus induces a homomorphism 
$$e_*: K_*(C^*_L(Z))\to K_*(C^*(Z))$$
on $K$-theory.

Next, we shall recall the coarse Baum--Connes conjecture for a metric space $X$. We denote by $B_X(x;r)$ the ball in $X$ of radius $r$ centered at $x$. Recall that a metric space is said to be uniformly locally finite if for every $r>0$, there exists a constant $N_r$ such that $\sup_{x\in X}\#B_X(x;r)\leq N_r$. 

Let $d>0$, the Rips complex is the simplicial complex whose vertex set is $X$ and a subset $\{x_1,x_2,\cdots,x_n\}\subset X$ spans a simplex if $d(x_i,x_j)\leq d$ for all $1\leq i,j\leq n$.

The Rips complex $P_d(X)$ is endowed with the spherical metric as follows. The simplex spanned by $\{x_1,x_2,\cdots,x_n\}$ is identified with the quadrant with diameter $1$:
$$S^{n-1}_+=\left\{(a_1,a_2,\cdots, a_n):\sqrt{\sum_{i=1}^na_i^2}=\frac{2}{\pi},~a_i\geq 0,\forall~1\leq i\leq n\right\}$$
via the map
$$
\sum_{j=1}^n a_jx_j\mapsto \frac{2}{\pi}\left(\frac{a_1}{\sum_{i=1}^n a_i^2},\cdots, \frac{a_n}{\sum_{i=1}^na_i^2} \right).
$$
Next, the Rips complex $P_s(X)$ is equipped with the path metric, and the distance between different connected components is defined to be infinity.

For each $k>0$, we choose the countable dense subset 
$$X_k=\left\{\sum_{i}a_ix_i\in P_k(X): \forall~i,~a_i\in \mathbb{Q}\right\}.$$ Notice that $X_k\subset X_{k'}$ if $k<k'$. For every $k>0$, we have an evaluation-at-zero
$$ev_0: C^*_L(P_k(X))\to C^*(P_k(X))$$ 
and it induces a homomorphism 
$$(ev_0)_*: K_*(C^*_L(P_k(X)))\to K_*(C^*(P_k(X)))$$ 
on $K$-theory. Passing to inductive limit, we obtain the coarse assembly map
$$
\mu: \lim_{k\to \infty}K_*(C^*_L(P_k(X)))\to \lim_{k\to \infty}K_*(C^*(P_k(X)))\cong K_*(C^*(X)).
$$

\noindent \textbf{The coarse Baum--Connes conjecture.} Let $X$ be a uniformly locally finite metric space. The coarse Baum--Connes conjecture claims that the coarse assembly map 
$$
\mu: \lim_{k\to \infty}K_*(C^*_L(P_k(X)))\to  K_*(C^*(X)).
$$
is an isomorphism. 

The coarse Baum--Connes conjecture was formulated by Roe \cite{Roe:coarse-cohomology-index-theory-on-Riem}, and it has been verified for a large class of metric spaces (c.f.~\cite{ Yu1998NovikoforFiniteAsymptoticDimension, Yu2000CoarseBaumConnesforSpacesUniformlyEmbeddable,DWYu:CBC-for-rel-expanders}). 

\begin{definition}
    Let $X$ be a uniformly locally finite metric space. The obstruction algebra is defined as 
    $$
    C^*_{L,0}(P_k(X))=\{f\in C_L^*(P_k(X)):f(0)=0\}.
    $$
\end{definition}
Note that the obstruction algebra is an ideal in the localization algebra, and we have following short exact sequence
$$
0\to C^*_{L,0}(P_k(X))\to C_L^*(P_k(X))\to C^*(P_k(X))\to 0,
$$
which gives rise to the six-term exact-sequence on $K$-theory. Therefore, the coarse Baum--Connes conjecture is equivalent to the vanishing of the following group
\begin{align*}
    \lim_{k\to \infty}K_*(C^*_{L,0}(P_k(X))).
\end{align*}

\section{Controlled $K$-theory and the quantitative Baum--Connes conjecture}

In this section, we recall some techniques in controlled $K$-theory
and the quantitative coarse Baum--Connes conjecture.
We then establish a key lemma for the proof of the main theorem,
which allows us to deduce the conjecture from suitable decompositions.

\subsection{Filtered $C^*$-algebras and controlled $K$-theory}
In this subsection, we recall the concepts of filtered $C^*$-algebras and controlled $K$-theory from \cite{GuentnerWillettYu2024DynamicalComplexity}. These tools provide a controlled Mayer--Vietoris argument for the quantitative coarse Baum--Connes conjecture.  

We first recall the concept of filtrations for $C^*$-algebras to encode geometry of metric spaces.
\begin{definition}
    A filtration on a $C^*$-algebra $A$ is a family of self-adjoint subspaces $(A_r)_{r\geq 0}$ of $A$ indexed by the non-negative real numbers $r$ with the following properties:
    \begin{enumerate}[(1)]
        \item if $r_1\leq r_2$, then $A_{r_1}\subset A_{r_2}$;

        \item for all $r_1,r_2$, we have $A_{r_1}\cdot A_{r_2}\subset A_{r_1+ r_2}$;

        \item the union $\bigcup_{r\geq 0} A_r$ is dense in $A$.
        
    \end{enumerate}
\end{definition}
In the case of Roe algebras, the filtration is defined by propagations. We apply this framework to the obstruction algebras $C^*_{L,0}(P_k(X))$. For $r>0$, we define
\begin{align*}
    C^*_{L,0}(M)_r:=\{f\in C^*_{L,0}(M):\text{propagation}(f(t))<r~\forall t\in [1,\infty)\}.
\end{align*}

Next, we recall the quantitative $K$-theory introduced by G. Yu \cite{Yu1998NovikoforFiniteAsymptoticDimension} to prove the coarse Baum-Connes conjecture for metric spaces with finite asymptotic dimension. 
\begin{definition}\label{quasiprojection}
Let $A$ be a $C^*$-algebra and $0<\varepsilon<\frac{1}{4}$. An $\varepsilon$-\emph{quasi-projection} in $A$ is an element $p \in A$ such that $p = p^*$ and $\|p^2 - p\| < \varepsilon$. 

If $S$ is a self-adjoint subspace of $A$, write $\mathrm{M}_n(S)$ for the matrices in $\mathrm{M}_n(A)$ with all entries in $S$, and let $P^{\varepsilon}_n(S)$ denote the set of $\varepsilon$-quasi-projections in $\mathrm{M}_n(S)$.

Let $\chi = \chi_{(1/2, \infty)}$ be the characteristic function of the interval $(1/2, \infty)$. Then $\chi$ is continuous on the spectrum of any $\varepsilon$-quasi-projection, so we obtain a map:
\[
\kappa : P^{\varepsilon}_n(S) \to P_n(A), \quad p \mapsto \chi(p),
\]
where $P_n(A)$ denotes the projections in $\mathrm{M}_n(A)$.
\end{definition}

\begin{definition}\label{quantitatike k-0}
Let $A$ be a non-unital $C^*$-algebra with a filtration $(A_r)_r$. Let $\Tilde{A}_r$ denote the subspace $A_r + \mathbb{C}1$ of $\Tilde{A}$.

Using the inclusions
\[
P^{\varepsilon}_n(\Tilde{A}_r) \ni p \mapsto
\begin{pmatrix}
p & 0 \\
0 & 0
\end{pmatrix}
\in P^{\varepsilon}_{n+1}(\Tilde{A}_r),
\]
we may define the union
\[
P^{\varepsilon}_{\infty}(\Tilde{A}_r) := \bigcup_{n=1}^\infty P^{\varepsilon}_n(\Tilde{A}_r).
\]

Let $C((0,1], \Tilde{A}_r)$ denote the self-adjoint subspace of the $C^*$-algebra $C((0,1], \Tilde{A})$ consisting of continuous functions with values in $\Tilde{A}_r$.

Let us assume that $0<\varepsilon<\frac{1}{24}$ and define an equivalence relation on $P^{\varepsilon}_{\infty}(\Tilde{A}_r) \times \mathbb{N}$ by declaring $(p, m) \sim (q, n)$ if there exists a positive integer $k$ and an element $h \in P^{6\varepsilon}_{\infty}(C([0,1], \Tilde{A}_{2r}))$ such that
\[
h(0) =
\begin{pmatrix}
p & 0 \\
0 & 1_{n+k}
\end{pmatrix}, \quad
h(1) =
\begin{pmatrix}
q & 0 \\
0 & 1_{m+k}
\end{pmatrix}.
\]
For $(p, m) \in P^{\varepsilon}_{\infty}(\Tilde{A}_r) \times \mathbb{N}$, denote by $[p, m]$ its equivalence class under $\sim$. The reason to choose $6\varepsilon$ in the equivalence class is to make the definition compatible with the quantitative Baum--Connes conjecture. (See Lemma~\ref{characterization of qcbc})

Let now $\rho: M_n(\Tilde{A}_r) \to M_n(\mathbb{C})$ be the restriction of the map induced on matrices by the canonical unital $*$-homomorphism $\rho: \Tilde{A}_r \to \mathbb{C}$ with kernel $A$.

Finally, we define
\[
K^{\varepsilon,r}_0(A) := \left\{ [p, m] \in P^{\varepsilon}_\infty(\Tilde{A}_r) \times \mathbb{N} \, \middle| \, \mathrm{rank}(\kappa(\rho(p))) = m \right\} / \sim.
\]
The set $K^{\varepsilon,r}_0(A)$ is equipped with an operation defined by
\[
[p, m] + [q, n] := \left[
\begin{pmatrix}
p & 0 \\
0 & q
\end{pmatrix}, m+n
\right].
\]
\end{definition}

\begin{definition}\label{quasiunitary}
    Let $A$ be a unital $C^*$-algebra and $0<\varepsilon<\frac{1}{4}$. An $\varepsilon$-\emph{quasi-unitary} in $A$ is an element $u \in A$ such that
\[
\|1 - uu^*\| < \varepsilon \quad \text{and} \quad \|1 - u^*u\| < \varepsilon.
\]

If $S$ is a self-adjoint subspace of $A$ containing the unit, we denote by $U^{\varepsilon}_n(S)$ the collection of quasi-unitaries in $M_n(S)$.

Note that since $\|1 - u^*u\| < \varepsilon < 1$, the element $u^*u$ is invertible. Hence, there is a well-defined map
\[
\kappa : U^{\varepsilon}_n(S) \to U_n(A), \quad u \mapsto u(u^*u)^{-1/2},
\]
where $U_n(A)$ denotes the group of unitaries in $M_n(A)$.

\end{definition}

\begin{definition}\label{quantitative k-1}
Let $A$ be a non-unital $C^*$-algebra with a filtration $(A_r)_r$.
Using the inclusions
\[
U^{\varepsilon}_n(\Tilde{A}_r) \ni u \mapsto
\begin{pmatrix}
u & 0 \\
0 & 1
\end{pmatrix}
\in U^{\varepsilon}_{n+1}(\Tilde{A}_r),
\]
we define the union
\[
U^{\varepsilon}_{\infty}(\Tilde{A}_r) := \bigcup_{n=1}^\infty U^{\varepsilon}_n(\Tilde{A}_r).
\]

We define an equivalence relation on $\mathcal{U}^{\varepsilon}_\infty(\Tilde{A}_r)$ by $u \sim v$ if there exists an element
\[
h \in U^{6\varepsilon}_\infty(C((0,1], \Tilde{A}_{2r}))
\quad \text{such that} \quad h(0) = u \quad \text{and} \quad h(1) = v.
\]
For $u \in U^{\varepsilon}_\infty(\Tilde{A}_r)$, denote by $[u]$ its equivalence class under $\sim$.
Finally, define
\[
K^{\varepsilon,r}_1(A) := U^{\varepsilon}_\infty(\Tilde{A}_r) \big/ \sim.
\]
The set $K^{\varepsilon}_1(A)$ is equipped with a binary operation defined by
\[
[u] + [v] := \left[
\begin{pmatrix}
u & 0 \\
0 & v
\end{pmatrix}
\right].
\]

\end{definition}
The advantage of quantitative $K$-theory is that it is more computable compared with the usual operator $K$-theory. It also has connections with the usual $K$-theory.  

\begin{definition}
Let $A$ be a non-unital $C^*$-algebra with a filtration. 
The comparison maps are defined as: 
\[
c_0 : K^{\varepsilon,r}_0(A) \longrightarrow K_0(A), \quad [p, m] \mapsto [\kappa(p)] - [1_m],
\]
\[
c_1 : K^{\varepsilon,r}_1(A) \longrightarrow K_1(A), \quad [u] \mapsto [\kappa(u)],
\]
\[
c := c_0 \oplus c_1 : K^{\varepsilon,r}_*(A) \longrightarrow K_*(A).
\]
\end{definition}

The map $c$ is asymptotically isomorphic in the following sense:

\begin{proposition}[\cite{GuentnerWillettYu2024DynamicalComplexity},~Proposition 4.9]
Let $A$ be a non-unital $C^*$-algebra with a filtration $(A_r)_r$, and $0<\varepsilon<\frac{1}{24}$. Then for any $x\in K_*(A)$, there exists $r>0$ and  $y\in K^{\varepsilon,r}(A)$ such that $c(y)=x$. Moreover, if $x,y \in K_*^{\varepsilon,s}(A)$ for some $s>0$ satisfies $c(x)=c(y)$, then there exists $r>0$ such that $x=y$ in $K_*^{\varepsilon,r}(A)$.
\end{proposition}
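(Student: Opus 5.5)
The plan is to establish the two assertions separately. Surjectivity comes from the density of the filtration together with standard functional calculus. Injectivity comes from lifting a $K$-theoretic homotopy to a controlled one. Throughout I assume $0<\varepsilon<\frac{1}{24}$ and work in $\Tilde{A}$ with the filtration $\Tilde{A}_r=A_r+\mathbb{C}1$. By the density axiom (3), $\bigcup_r \Tilde{A}_r$ is dense in $\Tilde{A}$, and likewise $\bigcup_r M_n(\Tilde{A}_r)$ is dense in $M_n(\Tilde{A})$.

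\emph{Surjectivity.} Consider first $K_0$, and take $x=[P]-[1_m]$ with $P$ a projection in $M_n(\Tilde{A})$ and $\mathrm{rank}\,\rho(P)=m$.
\begin{enumerate}[(1)]
\item By density, choose $a\in M_n(\Tilde{A}_r)$ with $\|a-P\|<\delta$.
\item Set $p=(a+a^*)/2$. Since the $A_r$ are self-adjoint, $p$ still lies in $M_n(\Tilde{A}_r)$.
\item For $\delta$ small, $\|p^2-p\|<\varepsilon$, and $\|\kappa(p)-P\|<1$, so $\kappa(p)$ is unitarily equivalent to $P$.
\item Also $\rho(p)$ is close to $\rho(P)$, so $\kappa(\rho(p))$ has rank $m$.
\end{enumerate}
Hence $c_0([p,m])=x$.

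For $K_1$, take $U$ a unitary in $M_n(\Tilde{A})$ and approximate it by $u\in M_n(\Tilde{A}_r)$. Then $u$ is an $\varepsilon$-quasi-unitary, and $\kappa(u)$ is homotopic to $U$ through unitaries by the straight-line path normalised. So $c_1([u])=[U]$.

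\emph{Injectivity.} Consider first $K_0$. If $c(x)=c(y)$ with $x=[p,m]$ and $y=[q,n]$, then after stabilising there is a continuous path of projections $H$ in $M_N(\Tilde{A})$ joining $\mathrm{diag}(\kappa(p),1_{n+k})$ to $\mathrm{diag}(\kappa(q),1_{m+k})$. The argument then runs in three steps.
\begin{enumerate}[(1)]
\item \emph{Discretise and approximate.} By uniform continuity on $[0,1]$, choose points $0=t_0<\dots<t_\ell=1$. At each $t_i$ pick a self-adjoint approximant $h_i\in M_N(\Tilde{A}_{r'})$ of $H(t_i)$, and interpolate linearly to get $h(t)$. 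Each $h_i$ is close to $H(t_i)$ and $H$ varies little between consecutive points, so $h(t)$ is uniformly close to $H(t)$. Therefore $h$ is a $\delta'$-quasi-projection path with values in $\Tilde{A}_{r'}$, and it is genuinely continuous.
\item \emph{Join the endpoints.} The endpoint $h(0)$ is close to $\mathrm{diag}(\kappa(p),1)$, but we need a path starting exactly at $\mathrm{diag}(p,1)$. Use the straight-line path from $p$ to $\kappa(p)$. This path is a $6\varepsilon$-quasi-projection path because $\|p-\kappa(p)\|$ is controlled by $\|p^2-p\|$, roughly $\le 2\varepsilon$, and convexity estimates keep $t^2$-terms below $6\varepsilon$.
\item \emph{Control the propagation.} The difficulty is that $\kappa(p)$ need not lie in $\Tilde{A}_{r}$. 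So I replace $\kappa(p)$ by an approximant $\tilde p\in\Tilde{A}_{r''}$ with $\|\tilde p-\kappa(p)\|$ small. The straight path $p\to\tilde p$ stays in $\Tilde{A}_{\max(s,r'')}$ with quasi-projection defect below $6\varepsilon$. I do the same at $t=1$.
\end{enumerate}
Concatenating these pieces gives an element of $P^{6\varepsilon}_\infty(C([0,1],\Tilde{A}_{2r}))$ once $r\ge\max(s,r',r'')$. This yields $x=y$ in $K_0^{\varepsilon,r}$ after the inclusion $K^{\varepsilon,s}\to K^{\varepsilon,r}$.

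For $K_1$ the argument is the same: a unitary path from $\kappa(u)$ to $\kappa(v)$ is approximated by a piecewise-linear quasi-unitary path, and the endpoint corrections $u\to\kappa(u)$ have norm $\lesssim\varepsilon$.

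The main obstacle is the endpoint bookkeeping in the last two steps: ensuring that the straight-line paths from $p$ to an approximant of $\kappa(p)$ remain $6\varepsilon$-quasi-projections. This needs the explicit bound $\|p-\kappa(p)\|\le\|p^2-p\|\cdot$const, obtained from spectral calculus, together with $\varepsilon<1/24$. This is exactly where the constant $6$ in the definition is used.
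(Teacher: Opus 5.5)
Your argument is correct and is the standard density-plus-approximation-of-homotopies proof behind the cited result; the paper gives no proof of its own, only the reference to Guentner--Willett--Yu. One small correction to your closing remark: the endpoint path $t\mapsto(1-t)p+t\kappa(p)$ is functional calculus of $p$, so its quasi-projection defect never exceeds $\|p^2-p\|<\varepsilon$ and the factor $6$ is not needed there (the paper introduces it for Lemma~\ref{characterization of qcbc}).
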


We fix some notations concerning the relationship between the control parameters $(\varepsilon,r)$ and the Rips parameter $k$ for obstruction algebras.
\begin{notation}\label{forgetful}
    Let $A$ be a filtered $C^*$-algebra. For two pairs $(\varepsilon,r)$ and $(\varepsilon',r')$ with $0<\varepsilon\leq \varepsilon'<\frac{1}{24}$ and $0<r\leq r'$, we have a natural forgetful map denoted by
    \begin{align*}
        \iota_0^{\varepsilon,\varepsilon',r,r'}&:K_0^{\varepsilon,r}(A)\to K_0^{\varepsilon',r'}(A) ,~ [p,m]\mapsto [p,m],\\
        \iota_1^{\varepsilon,\varepsilon',r,r'}&:K_1^{\varepsilon,r}(A)\to K_1^{\varepsilon',r'}(A) ,~ [u]\mapsto [u],\\
        \iota^{\varepsilon,\varepsilon',r,r'}:=\iota_0^{\varepsilon,\varepsilon',r,r'}\oplus \iota_1^{\varepsilon,\varepsilon',r,r'}&: K_*^{\varepsilon,r}(A) \to K_*^{\varepsilon',r'}(A).
    \end{align*}
\end{notation}

\begin{notation}\label{subspace inclusion}
    For an inclusion of metric spaces $i: Y\hookrightarrow X$, with countable dense subsets $Y_0\subset Y$ and $X_0\subset X$ satisfying $Y_0\subset X_0$, we denote the natural embedding by $V_i:\ell^2(Y_0)\otimes \mathcal{H}\to \ell^2(X_0)\otimes \mathcal{H}$. Then $\ad(V_i)$ does not increase propagation, and it induces a map
    \begin{align*}
        \ad(V_i)_*:K_*^{\varepsilon,r}(C^*_{L,0}(Y))\to K_*^{\varepsilon,r}(C^*_{L,0}(X)) 
    \end{align*}
    for every $0<\varepsilon<\frac{1}{24}$ and $r>0$. When no confusion arises, we omit $\ad(V_i)_*$ and regard elements of $K_*^{\varepsilon,r}(C^*_{L,0}(Y))$ as elements of $K_*^{\varepsilon,r}(C^*_{L,0}(X))$. In particular, for the inclusion $i:P_k(X)\hookrightarrow P_{k'}(X)$ between two Rips complexes of $X$, the induced map on their $K$-theory of the obstruction algebras are denoted by
    \begin{align*}
       \iota_{k,k'}:=\text{ad}(V_{i})_*:K^{\varepsilon,r}_*\left(C^*_{L,0}\left({P_k(X)}\right)\right) \longrightarrow K^{\varepsilon,r}_*\left(C^*_{L,0}\left({P_{k'}(X)}\right)\right).
    \end{align*}
\end{notation}

In the following, we shall recall some techniques for the computation of quantitative $K$-theory formulated in \cite{GuentnerWillettYu2024DynamicalComplexity}. Those techniques are crucial to reduce the computation of the quantitative $K$-theory of Roe algebras of the free product $G*H$ to that of $G$ and $H$.

\begin{definition}
 Let $A=(A_r)_{r\geq 0}$ be a filtered $C^*$-algebra, and $I$ be a $C^*$-ideal in $A$ equipped with its own filtration $I=(I_r)_{r\geq 0}$. We say that $I$ is a \textit{filtered ideal} of $A$ if for any $r \geq 0$, $I_r \subset A_r$, and if for any $r, s \geq 0$, $A_s \cdot I_r \cup I_r \cdot A_s \subset I_{s+r}$.
\end{definition}

\begin{definition}\label{definition of excisive}
Let $(I^\omega, J^\omega; A^\omega)_{\omega \in \Omega}$ be an indexed set of triples, where each $A^\omega$ is a filtered $C^*$-algebra, and each $I^\omega$ and $J^\omega$ is a filtered ideal in $A^\omega$. Give each stabilization $A^\omega \otimes \mathcal{K}$, $I^\omega \otimes \mathcal{K}$ and $J^\omega \otimes \mathcal{K}$ the filtrations 
\begin{align*}
    (A^\omega \otimes \mathcal{K})_r=A^\omega_r \otimes \mathcal{K},~(I^\omega \otimes \mathcal{K})_r=I^\omega_r \otimes \mathcal{K},~(J^\omega \otimes \mathcal{K})_r=J^\omega_r \otimes \mathcal{K}.
\end{align*}

\medskip

The collection $(I^\omega, J^\omega; A^\omega)_{\omega \in \Omega}$ of pairs of ideals and $C^*$-algebras containing them is \textit{uniformly excisive} if for any $r_0, m_0 \geq 0$ and $\varepsilon > 0$, there are $r \geq r_0$, $m \geq 0$, and $\delta > 0$ such that:

\begin{itemize}
    \item[(i)] for any $\omega \in \Omega$ and any $a \in (A^\omega \otimes \mathcal{K})_{r_0}$ of norm at most $m_0$, there exist elements $b \in (I^\omega \otimes \mathcal{K})_r$ and $c \in (J^\omega \otimes \mathcal{K})_r$ of norm at most $m$ such that $\|a - (b + c)\| < \varepsilon$;
    
    \item[(ii)] for any $\omega \in \Omega$ and any $a \in I^\omega \otimes \mathcal{K} \cap J^\omega \otimes \mathcal{K}$ such that
    \[
    d(a, (I^\omega \otimes \mathcal{K})_{r_0}) < \delta \quad \text{and} \quad d(a, (J^\omega \otimes \mathcal{K})_{r_0}) < \delta,
    \]
    there exists $b \in I^\omega_r \otimes \mathcal{K} \cap J^\omega_r \otimes \mathcal{K}$ such that $\|a - b\| < \varepsilon$.
\end{itemize}
\end{definition}

\begin{theorem}[\cite{GuentnerWillettYu2024DynamicalComplexity},~Proposition 7.7]\label{mayer-vietoris}
Let $\{(I^{\omega}, J^{\omega}; A^{\omega})\}_{\omega \in \Omega}$ be a uniformly excisive collection, where all algebras and ideals are non-unital. Then, for any $r \geq 0$, there exist $r_1, r_2 \geq r_0$ satisfying the following:

For each $\omega \in \Omega$ and each $x \in K^{\varepsilon,r}_*(A^\omega)$, there exists an element
\[
\partial_c (x) \in K^{\varepsilon,r_1}_*(I^{\omega} \cap J^{\omega})
\]
such that if $\partial_c (x)= 0$, then there exist elements
\[
y \in K^{\varepsilon,r_2}_*(I^{\omega}), \quad z \in K^{\varepsilon,r_2}_*(J^{\omega})
\]
such that
\[
\iota^{\varepsilon,\varepsilon,r,r_2}(x) = y + z \quad \text{in } K^{\varepsilon,r_2}_*(A^\omega).
\]
Furthermore, the boundary map $\partial_c$ has the following naturality:

Let $\{(K^{\theta}, L^{\theta}; B^{\theta})\}_{\theta \in \Theta}$ be another uniformly excisive collection of non-unital filtered $C^*$-algebras and ideals. Assume there exists a map $\pi : \Theta \to \Omega$ and, for each $\theta \in \Theta$, an inclusion
\[
A^{\pi(\theta)} \subset B^{\theta}
\]
such that for each $r \geq 0$:
\[
A^{\pi(\theta)}_r \subset B^{\theta}_r, \quad
I^{\pi(\theta)}_r \subset K^{\theta}_r, \quad
J^{\pi(\theta)}_r \subset L^{\theta}_r.
\]
Let $r$ be given, and let $r_1$ be as in the statement above for both uniformly excisive families.
Then the following diagram commutes:
\[
\begin{tikzcd}[column sep=large, row sep=large]
K^{\varepsilon,r}_*(A^{\pi(\theta)}) \arrow[r, "\partial_c"] \arrow[d] & K^{\varepsilon,r_1}_*(I^{\pi(\theta)} \cap J^{\pi(\theta)}) \arrow[d] \\
K^{\varepsilon,r}_*(B^{\theta}) \arrow[r, "\partial_c"] & K^{\varepsilon,r_1}_*(K^{\theta} \cap L^{\theta}),
\end{tikzcd}
\]
where the vertical maps are induced by subspace inclusions.
\end{theorem}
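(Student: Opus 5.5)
This is the controlled Mayer--Vietoris statement of Guentner--Willett--Yu, so the plan is to reproduce their construction of the controlled boundary map. We only need to track that every control constant depends on the data $(r_0,m_0,\varepsilon)$ of uniform excisiveness, and never on $\omega\in\Omega$. Throughout we work with the stabilized algebras $A^\omega\otimes\mathcal{K}$, which is harmless because quantitative $K$-theory is defined through matrices. We treat the odd case $x=[u]\in K_1^{\varepsilon,r}(A^\omega)$ in detail. The even case follows by suspension, or by the analogous construction $p\mapsto e^{2\pi i b}$ with $b$ a controlled lift of $p$ into $I^\omega$.

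\emph{Construction of $\partial_c$.} Let $u$ be an $\varepsilon$-quasi-unitary in $M_n(\widetilde{A^\omega}_r)$.
\begin{enumerate}[(1)]
\item Apply condition (i) of Definition~\ref{definition of excisive} with $r_0=r$ and $m_0=\|u\|\le 2$. This writes $u-1\approx b+c$ with $b\in (I^\omega\otimes\mathcal{K})_{r'}$, $c\in(J^\omega\otimes\mathcal{K})_{r'}$, $\|b\|,\|c\|\le m$. Thus $u$ is close to $(1+b)+c$, and modulo $J$ it is close to $1+b$.
\item Form the standard $2\times 2$ rotation lift of $u$ and its conjugate, or equivalently the controlled Bott--type element
\[
w=\begin{pmatrix} 1+b & 0\\ 0& 1+b^*\end{pmatrix}\text{-type lift},\qquad
p=w\begin{pmatrix}1&0\\0&0\end{pmatrix}w^*.
\]
The quasi-projection $p$ lies in $M_{2n}(\widetilde{I^\omega})$ with propagation at most $4r'$. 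Its defect from $\mathrm{diag}(1,0)$ is, up to an error controlled by $\varepsilon$, a polynomial expression in $u,b,c$. That expression is simultaneously close to $(I^\omega)_{4r'}$, because it is built from $b$, and close to $(J^\omega)_{4r'}$, because $b\equiv u-1-c$ and $u$ is nearly unitary.
\item Invoke condition (ii) with $\delta$ small. This replaces $p$ by a quasi-projection $p'$ with entries in $(I^\omega\cap J^\omega)_{r_1}$, which defines $\partial_c[u]:=[p',n]$. The equivalence parameter $6\varepsilon$, rather than $\varepsilon$, absorbs the accumulated errors after adjusting $\varepsilon$ in the choice of $\delta$.
\item Well-definedness: homotopies $h$ with propagation $2r$ are handled by the same recipe applied to $C([0,1],A^\omega)$. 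The family $C([0,1],\cdot)$ is again uniformly excisive with the same constants.
\end{enumerate}

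\emph{Exactness at $K_*(A^\omega)$.} Suppose $\partial_c[u]=0$ in $K_0^{\varepsilon,r_1}(I^\omega\cap J^\omega)$. Then there is a controlled homotopy, inside matrices over $(I\cap J)^\sim_{2r_1}$, from $p'\oplus 1_k$ to $\mathrm{diag}(1_n,0)\oplus 1_k$. Lifting this homotopy gives a controlled quasi-unitary path $v_t$ in $\widetilde{I^\omega\cap J^\omega}$ which conjugates $p$ back to the standard projection. Then $v_1^*w$ is nearly diagonal, and its corner $y'$ is a quasi-unitary of the form $1+(\text{element of }I)$ with propagation bounded in terms of $r_1$. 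Similarly $y'^{*}u$ is close to $1+(\text{element of }J)$, since it is close to $1$ modulo $J$. We set $y=[y']\in K_1^{\varepsilon,r_2}(I^\omega)$ and $z=[y'^*u]\in K_1^{\varepsilon,r_2}(J^\omega)$. The identity $[u]=[y'][y'^*u]$, together with the controlled rotation homotopy $\mathrm{diag}(ab,1)\sim\mathrm{diag}(a,b)$ of propagation at most twice the original, gives $\iota(x)=y+z$ in $K_1^{\varepsilon,r_2}(A^\omega)$. All constants produced so far ($r',m,\delta,r_1,r_2$) came from applying (i) and (ii) with inputs depending only on $r$ and $\varepsilon$, so they are uniform in $\omega$.

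\emph{Naturality.} If $A^{\pi(\theta)}\subset B^\theta$ with compatible filtrations and ideals, the decompositions $b,c$ chosen in $A^{\pi(\theta)}$ are also admissible decompositions in $B^\theta$. The constructed $p'$ is then literally the same element. It suffices to check that the class is independent of the choices of $b,c$ and of the approximant in (ii), up to increasing $r_1$ by a fixed amount. This holds because two choices differ by a small element, and linear interpolation between them gives a controlled homotopy. To get commutativity at the same $r_1$, take $r_1$ to be the maximum of the constants for both families, as the statement allows.

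The main obstacle is step (2)--(3) of the construction. There one must verify quantitatively that the error term of $p$ is \emph{simultaneously} $\delta$-close to both filtered ideals at a fixed propagation, so that condition (ii) applies. One must also track how the $\varepsilon$-losses in the $\kappa$-type functional calculus are bounded by $6\varepsilon$. The first thing to try is to replace functional calculus by explicit polynomial formulas, as in the Guentner--Willett--Yu approach, so that propagation and norm estimates are explicit.
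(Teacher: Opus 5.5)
The paper gives no proof of this statement; it quotes it from Guentner--Willett--Yu, so the comparison is with their construction. Your outline has the right shape: split $u-1$ with (i), build an index-type quasi-projection over $\widetilde{I}$, and push its defect into $I\cap J$ with (ii). However, the step you flag as the main obstacle fails as written, and there are three concrete problems in it.

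\emph{(a) The quasi-projection is not a quasi-projection.} The element $x=1+b$ is close to a unitary only \emph{modulo} $J$. In $\widetilde{I}$, the element $1-x^*x$ is essentially arbitrary, since $b$ is determined only up to $I\cap J$. So $w\,\mathrm{diag}(1,0)\,w^*$ is in general not a quasi-projection. This holds for your diagonal $w$, which gives $\mathrm{diag}(xx^*,0)$, and also for naive rotation matrices in $x,x^*$. You need a genuine quasi-unitary $W\in M_{2n}(\widetilde{I})$ lifting $\mathrm{diag}(u,u^*)$ modulo $J$. One option is a product of exponentials of the $I$-parts of controlled logarithms along the rotation path from $\mathrm{diag}(u,u^*)$ to $1$, which uses a uniformly bounded number of steps. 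Another is a dilation of a contractive lift, with polynomially approximated square roots.

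\emph{(b) The defect is too large for (ii).} Even with such a $W$, the image of $W\,\mathrm{diag}(1,0)\,W^*-\mathrm{diag}(1,0)$ modulo $J$ has the size of the defect $\|1-u^*u\|$, or its square root. That is comparable to the fixed $\varepsilon$. Condition (ii), by contrast, needs closeness below $\delta(r_0,m_0,\varepsilon')$, with $\varepsilon'$ of order $\varepsilon$ so that the output is an $\varepsilon$-quasi-projection, and this $\delta$ may be far smaller than $\varepsilon$. Choosing $\delta$ small cannot help, because the defect is dictated by $u$. The $6\varepsilon$ in the equivalence relation does not help either, since $\partial_c[u]$ itself must be an $\varepsilon$-quasi-projection. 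The missing step is to upgrade $u$ first. For example, iterate $u\mapsto\frac12u(3-u^*u)$: each step roughly squares the defect, triples the propagation, and is linearly homotopic to the previous one. This gives a $\delta'$-quasi-unitary whose propagation depends only on $(r,\varepsilon,\delta')$, by the same polynomial-calculus device as in Lemma~\ref{characterization of qcbc}.

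\emph{(c) Condition (ii) does not apply to your error term.} It only applies to elements already in $I\otimes\mathcal{K}\cap J\otimes\mathcal{K}$, whereas your error term lies in $I$ and is merely near $J$. Use $d(a,I\cap J)=d(a,J)$ for $a\in I$, which holds because $I/(I\cap J)\cong(I+J)/J$ is isometric. This lets you move the term into $I\cap J$ before applying (ii).

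The same issues recur in your exactness step. Knowing that $y'^*u$ is close to $1$ modulo $J$ produces a quasi-unitary over $\widetilde{J}$ at propagation $r_2$ only after the upgrade and another round of (i) and (ii). Also, turning the homotopy witnessing $\partial_c[u]=0$ into a conjugating quasi-unitary needs a controlled conjugation lemma that is uniform in the homotopy, for instance via the usual block-stacking trick. Naively subdividing an arbitrary continuous homotopy gives no bound, uniform in $\omega$, on the number of factors and hence on the propagation.

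In the naturality paragraph, two splittings $(b,c)$ and $(b',c')$ do \emph{not} differ by a small element: $b-b'$ is only approximately in $I\cap J$. What does work is that the convex combinations $((1-t)b+tb',(1-t)c+tc')$ are again admissible splittings. The parametrized construction over $C([0,1],\cdot)$ then supplies the homotopy.
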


\subsection{Quantitative coarse Baum--Connes conjecture}
In this subsection, we recall the definition of the quantitative assembly map and the quantitative coarse Baum--Connes conjecture.

Since we have an isomorphism (c.f.~\cite[Lemma 6.3]{Toyota2025ControlledK-theory})
\begin{align*}
    K_*^{\varepsilon,r}(C^*_L(P_k(X)))\cong K_*(C^*_L(P_k(X))) 
\end{align*}
for all $0<\varepsilon<\frac{1}{8}$ and $r>0$, the evaluation at $t=0$ induces 
\begin{align*}
    \mu_{k,*}^{\varepsilon,r}:=(ev_0)_*:K_*(C^*_L(P_k(X)))\to K_*^{\varepsilon,r}(C^*(P_k(X))).
\end{align*}
This map is called the quantitative assembly map.
We define the following two statements:

\begin{enumerate}[i)]
    \item $QI(k,k',\varepsilon,r)$: for any $x\in K_*(C^*_L(P_k(X)))$ with $\mu_{k,*}^{\varepsilon,r}(x)=0\in K_*^{\varepsilon,r}(C^*(P_k(X)))$, we have $\iota_{k,k'}(x)=0$, where $\iota_{k,k'}$ is induced by the inclusion $P_k(X)\hookrightarrow P_{k'}(X)$ (See Notation~\ref{subspace inclusion}).

    \item $QS(k,k',\varepsilon,\varepsilon',r,r')$: for any $y\in K_*^{\varepsilon,r}(C^*(P_k(X)))$, there exists $x \in K_*(C^*_L(P_{k'}(X)))$ such that $\mu_{k',*}^{\varepsilon',r'}(x)=\iota^{\varepsilon,\varepsilon',r,r'}\circ \iota_{k,k'}(y)\in K_*^{\varepsilon',r'}(C^*(P_{k'}(X)))$.
\end{enumerate}
\begin{definition}
Let $\mathcal{X}$ be a family of metric spaces.
\begin{enumerate}[i)]
    \item We say that $\mathcal{X}$ satisfies the quantitative injectivity of the coarse assembly map if for any $k\in \mathbb{N}$, there exists $0<\varepsilon<\frac{1}{24}$ such that for any $r>0$, there exists $k'\geq k$ for which $QI(k,k',\varepsilon,r)$ for all $X\in \mathcal{X}$.

    \item We say that $\mathcal{X}$ satisfies the quantitative surjectivity of the coarse assembly map if for any $k\in \mathbb{N}$, there exists $0<\varepsilon<\frac{1}{24}$ such that for any $r>0$, there exists $k'>k$, $r'>r+1$ and $\frac{1}{24}>\varepsilon'>\varepsilon$ satisfying $QS(k,k',\varepsilon,\varepsilon',r,r')$ for all $X\in \mathcal{X}$.

    \item We say $\mathcal{X}$ satisfies the quantitative coarse Baum--Connes conjecture if it satisfies both the quantitative injectivity of the coarse assembly map and the quantitative surjectivity of the coarse assembly map.
\end{enumerate}
\end{definition}

The short exact sequence
\begin{align*}
0\to C^*_{L,0}(P_k(X))
\xrightarrow{i} C^*_L(P_k(X))
\xrightarrow{\pi} C^*(P_k(X))
\to 0
\end{align*}
induces a six-term $(\lambda,h)$-exact sequence
of $(\lambda_D,h_D)$-controlled morphisms
\cite[Theorem~4.7]{Oyono-Oyono-Yu-On-Quantitative-K-theory}.

More precisely, $\lambda,\lambda_D$ are constants, and $h:(0,\frac{1}{4\lambda})\to [1,\infty)$, $h_D:(0,\frac{1}{4\lambda_D})\to [1,\infty)$ are functions.
There are natural maps
\begin{align*}
i_*:\,&K_*^{\varepsilon,r}(C^*_{L,0}(P_k(X)))\to K_*^{\varepsilon,r}(C^*_L(P_k(X))),\\
\pi_*:\,&K_*^{\varepsilon,r}(C^*_L(P_k(X)))\to K_*^{\varepsilon,r}(C^*(P_k(X))),
\end{align*}
for all $0<\varepsilon<\frac{1}{4}$ and $r>0$,
and a controlled boundary map
\begin{align*}
\partial:\,&K_*^{\varepsilon,r}(C^*(P_k(X)))\to K_{*+1}^{\lambda_D\varepsilon,h_D(\varepsilon)r}(C^*_{L,0}(P_k(X)))
\end{align*}
for $0<\varepsilon<\frac{1}{4\lambda_D}$ and $r>0$.
These maps satisfy the following properties:
\begin{enumerate}[i)]
\item The compositions $\pi_*\circ i_*$, $\partial\circ \pi_*$, and $i_*\circ \partial$ are the zero map.

\item For any $0<\varepsilon<\frac{1}{4\max\{\lambda,\lambda_D\}}$ and $r>0$, if $y\in K_*^{\varepsilon,r}(C^*(P_k(X)))$ satisfies $\partial y=0$, then there exists $x\in K_*^{\lambda\varepsilon,h(\varepsilon)r}(C^*_L(P_k(X)))$ such that
\begin{align*}
\pi_*(x)=\iota^{\varepsilon,\lambda\varepsilon,r,h(\varepsilon)r}(y).
\end{align*}
The analogous statements hold for the other compositions.
\end{enumerate}

As a consequence, the quantitative coarse Baum--Connes conjecture reduces to quantitative estimates for the vanishing of the $K$-theory of the obstruction algebras.

 \begin{lemma}[Lemma 3.22, \cite{zhang2024QuantitativeCoarseBaumConnes}]
    Let $(\lambda,h)$ and $(\lambda_D,h_D)$ be control pairs as above. For a family of metric spaces $\mathcal{X}$, the quantitative coarse Baum--Connes conjecture holds if and only if for any $k\in \mathbb{N}$ there exists $0<\varepsilon_k<\frac{1}{4\max\{\lambda,\lambda_D\}}$ such that for any $r>0$, there exist $k'\geq k$, $\frac{1}{4\max\{\lambda,\lambda_D\}}>\varepsilon'\geq \varepsilon_k$ and $r'\geq r+1$ such that  
    \begin{equation}\label{controlled estimate of obstruction}
        \iota^{\varepsilon_k,\varepsilon',r,r'}\circ \iota_{k,k'}:K^{\varepsilon_k,r}_*(C^*_{L,0}(P_k(X))) \to K^{\varepsilon',r'}_*(C^*_{L,0}(P_{k'}(X)))
    \end{equation}
    is a zero map for all $X\in \mathcal{X}$.
\end{lemma}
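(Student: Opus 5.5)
The plan is to derive both directions from the controlled six-term $(\lambda,h)$-exact sequence attached to
$$0\to C^*_{L,0}(P_k(X))\to C^*_L(P_k(X))\to C^*(P_k(X))\to 0,$$
together with the isomorphism $K_*^{\varepsilon,r}(C^*_L(P_k(X)))\cong K_*(C^*_L(P_k(X)))$ for $\varepsilon<\frac18$. Throughout, every control parameter must be chosen uniformly over $X\in\mathcal{X}$. This is automatic, because $\lambda,\lambda_D,h,h_D$ are universal and the hypotheses are uniform statements about the family.

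\textbf{Vanishing of the obstruction maps implies the conjecture.}

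For quantitative surjectivity, start with $y\in K_*^{\varepsilon,r}(C^*(P_k(X)))$. Its boundary $\partial y$ lies in $K_{*+1}^{\lambda_D\varepsilon,h_D(\varepsilon)r}(C^*_{L,0}(P_k(X)))$. Apply the vanishing of \eqref{controlled estimate of obstruction} with input parameters $(\lambda_D\varepsilon,h_D(\varepsilon)r)$; this produces $k'$, $\varepsilon'$, $r'$ for which the image of $\partial y$ in $K^{\varepsilon',r'}_*(C^*_{L,0}(P_{k'}(X)))$ is zero. For this we take $\varepsilon=\varepsilon_k/\lambda_D$.

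By naturality of $\partial$ under the inclusion $P_k\hookrightarrow P_{k'}$ and the forgetful maps, $\partial(\iota_{k,k'}y)$ vanishes at a suitable level. Controlled exactness at $C^*(P_{k'}(X))$ then gives $x\in K_*^{\lambda\varepsilon'',h(\varepsilon'')r''}(C^*_L(P_{k'}(X)))\cong K_*(C^*_L(P_{k'}(X)))$ with $\pi_*(x)$ equal to the image of $y$. This is exactly $QS$.

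For quantitative injectivity, take $x\in K_*(C^*_L(P_k(X)))$ with $\mu^{\varepsilon,r}_{k,*}(x)=0$. Controlled exactness at $C^*_L$ gives $z\in K_*^{\lambda\varepsilon,h(\varepsilon)r}(C^*_{L,0}(P_k(X)))$ with $i_*(z)=x$ at the bigger scale. Applying the hypothesis to $z$ at these parameters shows that $\iota_{k,k'}(z)$ vanishes, and hence $\iota_{k,k'}(x)=0$.

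\textbf{The conjecture implies vanishing of the obstruction maps.}

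Let $z\in K^{\varepsilon_k,r}_*(C^*_{L,0}(P_k(X)))$. First, $i_*(z)\in K_*(C^*_L(P_k(X)))$ satisfies $\mu^{\varepsilon,r}_{k,*}(i_*z)=\pi_*i_*z=0$ at the appropriate parameters. Quantitative injectivity then yields $k_1$ with $i_*(\iota_{k,k_1}z)=0$.

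Controlled exactness at $C^*_{L,0}$ now gives $w\in K_{*+1}^{\varepsilon_1,r_1}(C^*(P_{k_1}(X)))$ with $\partial w=\iota_{k,k_1}z$ after enlarging parameters. Quantitative surjectivity gives $k'\ge k_1$ and $x\in K_*(C^*_L(P_{k'}(X)))$ with $\pi_*(x)=\iota_{k_1,k'}(w)$ at scale $(\varepsilon_2,r_2)$. Therefore
$$\iota_{k,k'}(z)=\partial(\iota_{k_1,k'}w)=\partial\pi_*(x)=0,$$
where the first equality uses naturality of $\partial$ and the last uses $\partial\circ\pi_*=0$. The resulting $\varepsilon',r'$ are $\lambda_D\varepsilon_2$ and $h_D(\varepsilon_2)r_2$, which are increased to be at least $r+1$ if needed.

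\textbf{Main obstacle.}

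The real difficulty is bookkeeping of the parameters. Three things must be checked:
\begin{itemize}
\item $\varepsilon$ must be chosen once for each $k$, before $r$, and must stay below $\frac{1}{4\max\{\lambda,\lambda_D\}}$ after the multiplications by $\lambda,\lambda_D$. I will handle this by taking $\varepsilon_k$ small, e.g. dividing the $\varepsilon$ given by the conjecture by $\lambda\lambda_D$.
\item The boundary map must commute with the Rips inclusions and with the forgetful maps $\iota^{\varepsilon,\varepsilon',r,r'}$.
\item The constants $k', r'$ obtained depend only on $(k,\varepsilon,r)$ and not on $X$, since they come from the uniform hypotheses.
\end{itemize}
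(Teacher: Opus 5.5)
You are right that the paper gives no proof of its own here. It quotes Zhang and only remarks that the statement follows from the controlled six-term sequence, so your diagram chase is the natural route.

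The direction from vanishing of the obstruction maps to $QI$ and $QS$ works as you describe. Take $\varepsilon=\varepsilon_k/\lambda$ for injectivity and $\varepsilon=\varepsilon_k/\lambda_D$ for surjectivity. In the surjectivity argument, push $\iota_{k,k'}(y)$ up to level $\varepsilon'/\lambda_D$ so that $\partial$ lands exactly where the vanishing is known.

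\textbf{The gap.} The converse direction has a gap in exactly the bookkeeping you flagged, and making $\varepsilon_k$ small does not close it.
\begin{itemize}
\item \emph{Input level.} You apply quantitative surjectivity at the Rips parameter $k_1$ produced by quantitative injectivity, and $k_1$ depends on $r$. The definition only supplies an admissible input level $\varepsilon_{QS}(k_1)$ depending on $k_1$, with no uniformity, so it may tend to $0$ as $r\to\infty$. Your class $w$ lives at level $\lambda\varepsilon_k$, with $\varepsilon_k$ fixed before $r$. Forgetful maps only raise $\varepsilon$, so nothing guarantees $\lambda\varepsilon_k\le\varepsilon_{QS}(k_1)$. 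Dividing ``the $\varepsilon$ given by the conjecture'' by $\lambda\lambda_D$ controls $\varepsilon_{QS}(k)$, not $\varepsilon_{QS}(k_1)$.
\item \emph{Output level.} The output level $\varepsilon_2$ of $QS$ is only known to satisfy $\varepsilon_2<\frac{1}{24}$. So $\partial$ need not be defined at level $\varepsilon_2$, and your final $\varepsilon'=\lambda_D\varepsilon_2$ need not satisfy $\varepsilon'<\frac{1}{4\max\{\lambda,\lambda_D\}}$.
\end{itemize}

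\textbf{The missing ingredient.} What you need is the $\varepsilon$-lowering argument by functional calculus, the same one the paper uses in Lemma~\ref{characterization of qcbc}. For $\varepsilon_0<\varepsilon_1<\frac{1}{24}$, pick a polynomial $f$ approximating $\chi_{(1/2,\infty)}$ within $\varepsilon_0/2$ on the spectra of $6\varepsilon_1$-quasi-projections. Then $p\mapsto f(p)$ sends $(\varepsilon_1,r)$-quasi-projections to $(\varepsilon_0,\deg(f)\,r)$-quasi-projections that are linearly homotopic to $p$; the same works for quasi-unitaries. This gives two facts:
\begin{itemize}
\item[(a)] every class at level $(\varepsilon_1,r)$ agrees at level $\varepsilon_1$, after enlarging propagation, with the image of a class at level $\varepsilon_0$;
\item[(b)] two classes at level $\varepsilon_0$ that agree at level $\varepsilon_1$ already agree at level $\varepsilon_0$ after multiplying the propagation by a constant depending only on $(\varepsilon_0,\varepsilon_1)$.
\end{itemize}

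\textbf{How to repair the chase.} Since $r'$ may be taken as large as you like, proceed as follows.
\begin{itemize}
\item If $\lambda\varepsilon_k>\varepsilon_{QS}(k_1)$, use (a) to replace $w$, before applying $QS$, by a class at level $\varepsilon_{QS}(k_1)$ with the same image at level $\lambda\varepsilon_k$.
\item After applying $QS$, use (b) to bring the identity $\pi_*(x)=\iota_{k_1,k'}(w)$ from level $\varepsilon_2$ back down to level at most $\lambda\varepsilon_k$, where $\partial$ is defined.
\item Then $\partial\circ\pi_*=0$ gives the vanishing with $\varepsilon'=\lambda_D\lambda\varepsilon_k$.
\end{itemize}
This $\varepsilon'$ is in the required range once $\varepsilon_k$ is chosen small in terms of $\lambda$, $\lambda_D$ and the injectivity threshold at $k$ alone.
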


Next, we strengthen the above result by showing that $\varepsilon'$ in
\eqref{controlled estimate of obstruction}
can be chosen to be equal to $\varepsilon_k$.

\begin{lemma}\label{characterization of qcbc}
    If a family $\mathcal{X}$ of metric spaces satisfies the quantitative coarse Baum--Connes conjecture, then for any $k\in \mathbb{N}$, there exists $0<\varepsilon_k<\frac{1}{24\max\{\lambda,\lambda_D\}}$ satisfying that for any $r>0$, there exists $k'\geq k$, and $r'\geq r+1$ such that  
    \begin{equation}
        \iota^{\varepsilon_k,\varepsilon_k,r,r'}\circ \iota_{k,k'}:K^{\varepsilon_k,r}_*(C^*_{L,0}(P_k(X))) \to K^{\varepsilon_k,r'}_*(C^*_{L,0}(P_{k'}(X)))
    \end{equation}
    is a zero map for all $X\in\mathcal{X}$.
\end{lemma}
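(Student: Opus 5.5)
Starting from the previous lemma (Lemma 3.22 of \cite{zhang2024QuantitativeCoarseBaumConnes}), we already have, for each $k$, an $\varepsilon_k$ and for each $r$ some $k'$, $\varepsilon'\ge\varepsilon_k$, $r'$ with $\iota^{\varepsilon_k,\varepsilon',r,r'}\circ\iota_{k,k'}=0$. The task is to trade the loss $\varepsilon_k\to\varepsilon'$ for a loss only in propagation. The key observation is that in the equivalence relation of Definitions \ref{quantitatike k-0} and \ref{quantitative k-1}, classes in $K^{\varepsilon,r}_*$ are identified through homotopies in $P^{6\varepsilon}_\infty$ (resp. $U^{6\varepsilon}_\infty$) with propagation $2r$. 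So I would begin by shrinking $\varepsilon_k$: given the $\varepsilon_k$ produced by the previous lemma, I would aim to show that the vanishing can be arranged with $\varepsilon'\le 6\varepsilon$ for a suitable smaller starting $\varepsilon$, while staying below $\frac{1}{24\max\{\lambda,\lambda_D\}}$.

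Concretely, I would fix $k$ and take $\varepsilon_k$ from Lemma 3.22, replaced if necessary by a smaller value so that $\varepsilon_k<\frac{1}{24\max\{\lambda,\lambda_D\}}$. The forgetful maps $\iota^{\varepsilon,\varepsilon_k,r,r}$ for $\varepsilon\le\varepsilon_k$ commute with $\iota_{k,k'}$, so the vanishing statement persists when the source parameter is decreased. Next, given $x=[p,m]\in K^{\varepsilon_k,r}_0(C^*_{L,0}(P_k(X)))$ (the $K_1$ case is identical), its image in $K^{\varepsilon',r'}_0(C^*_{L,0}(P_{k'}(X)))$ vanishes. Unwinding the definition, this gives a homotopy $h$ of $6\varepsilon'$-quasi-projections of propagation $2r'$ connecting $\mathrm{diag}(p,1_{n+j})$ to $\mathrm{diag}(1_m,\dots)$, i.e.\ to a trivial class.

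The main step is to improve the quasi-projection constant along this homotopy without destroying propagation control. I would apply functional calculus by a polynomial approximation $f$ of $\chi_{(1/2,\infty)}$ on the spectrum $[-\delta,\delta]\cup[1-\delta,1+\delta]$. Since $\|f(q)^2-f(q)\|$ can be made as small as desired when $\|q^2-q\|<6\varepsilon'<1/4$, with $f$ a fixed polynomial of degree $N$, $f(h)$ is an $\varepsilon_k$-quasi-projection (in particular $6\varepsilon_k$) of propagation at most $2Nr'$. Moreover, the line segment between $p$ and $f(p)$ stays within $6\varepsilon_k$-quasi-projections of propagation $\le Nr$, provided $\varepsilon_k$ was chosen small relative to the error of $f$ (this is where the smallness of $\varepsilon_k$ and $\varepsilon_k<1/24$ are used). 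Concatenating these paths shows that $[p,m]=0$ in $K^{\varepsilon_k,r''}_0$ with $r''=Nr'$ (up to the factor-$2$ convention). Since $N$ depends only on $\varepsilon',\varepsilon_k$, and $\varepsilon'$ is bounded by $\frac{1}{4\max\{\lambda,\lambda_D\}}$, uniformity over $X\in\mathcal X$ is preserved, and the claim follows with $r'$ replaced by $\max\{Nr',r+1\}$.

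The main obstacle I expect is the quantitative bookkeeping of this polynomial truncation, since $6\varepsilon'$ may exceed the range where a fixed polynomial yields an $\varepsilon_k$-quasi-projection uniformly. If that is the case, I would instead use the controlled ``$\kappa$'' normalization from \cite{Oyono-Oyono-Yu-On-Quantitative-K-theory} (the natural map $K^{\varepsilon',r'}\to K^{\varepsilon,h r'}$ for quasi-projections, which is injective up to control) to pass from $\varepsilon'$ back to $\varepsilon_k$ at the cost of the control function.
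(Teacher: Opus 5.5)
Your proposal is correct and is essentially the paper's argument. Start from the vanishing given by Lemma 3.22 and use a polynomial approximation $f$ of $\chi$, which upgrades quasi-projections back to the $\varepsilon_k$ level at the cost of multiplying propagation by $\deg f$. The linear homotopy from $p$ to $f(p)$ then shows that $\iota^{\varepsilon_k,\varepsilon_k,r,Nr'}\circ\iota_{k,k'}$ is zero.

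Two points of yours are actually slightly more careful than the paper's write-up. First, you apply $f$ directly to the $6\varepsilon'$-homotopy, so $f$ must approximate $\chi$ on the spectra of $6\varepsilon'$-quasi-projections. This is possible because $K^{\varepsilon',r'}_*$ is only defined for $\varepsilon'<\frac{1}{24}$, so the obstacle you anticipated does not arise. Second, you explicitly shrink $\varepsilon_k$ below $\frac{1}{24\max\{\lambda,\lambda_D\}}$.

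One slip: you have the smallness condition for the segment from $p$ to $f(p)$ backwards. What is needed is that the approximation error of $f$ be small compared with $\varepsilon_k$ (e.g.\ at most $\varepsilon_k/2$, as in the paper). This is available because $f$ is chosen after $\varepsilon_k$ and $\varepsilon'$ are fixed.
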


\begin{proof}
We deal with the case $*=0$, and the case $*=1$ is the same. Note that if $p$ is a self-adjoint operator with $\|p^2-p\|\leq \varepsilon'$, then $\sigma(p)\subset [-\varepsilon',\varepsilon'] \cup [1-\varepsilon',1+\varepsilon']=:K$. 
    Choose a polynomial $f$ such that $|f(x)-\chi_{[\frac{1}{2},\infty)}(x)|\leq \frac{\varepsilon_k}{2}$ for all $x\in K$. Then functional calculus yields that
\[
p \mapsto f(p)
\]
sends $(\varepsilon',r')$-projections to $(\varepsilon_k,(r')^{\deg(f)})$- projections, and moreover
\[
\|p - f(p)\| \le \varepsilon'.
\]
 It follows that the linear homotopy between $p$ and $f(p)$ is a path of $(6\varepsilon_k,(r')^{\deg(f)})$-projections for any $(\varepsilon_k, r)$-projection $p$.
 Therefore, 
     \begin{align*}
        f_*\circ\iota^{\varepsilon_k,\varepsilon',r,r'}\circ \iota_{k,k'}:K^{\varepsilon_k,r}_0(C^*_{L,0}(P_k(X))) \to K^{\varepsilon_k,(r')^{\deg(f)}}_0(C^*_{L,0}(P_{k'}(X)));~ [p,m]\mapsto [f(p), m]
          \end{align*}
    coincide with the forgetful map
    \begin{align*}
        \iota^{\varepsilon_k,\varepsilon_k,r,(r')^{deg(f)}}\circ \iota_{k,k'}:K^{\varepsilon_k,r}_0(C^*_{L,0}(P_k(X))) \to K^{\varepsilon_k,(r')^{\deg(f)}}_0(C^*_{L,0}(P_{k'}(X)));~ [p,m]\mapsto [p, 1_m].
    \end{align*}
    (See the definition of the equivalence relation $\sim$ to define $K^{\varepsilon,r}$ in Definition~\ref{quantitatike k-0}.)
\end{proof}

\begin{remark}
We remark here that the quantitative coarse Baum--Connes conjecture is invariant under uniformly coarsely equivalences in the following sense.

Let $\mathcal{X}$ and $\mathcal{Y}$ be families of metric spaces. We say that $\mathcal{X}$ is uniformly coarsely equivalent to some of $\mathcal{Y}$ if there exist $C>0$ and two functions $\rho_{\pm}:\mathbb{R}\to\mathbb{R}$ with $\lim_{t\to\infty}\rho_{\pm}(t)\to\infty$ such that for any $X\in \mathcal{X}$ there exists $Y\in \mathcal{Y}$ and a coarse equivalence $f:X\to Y$ such that $N_C(f(X))=Y$ and
    \begin{align*}
        \rho_{-}(d_{X}(x,y))\leq d_{Y}(f(x),f(y))\leq  \rho_{+}(d_{X}(x,y)).
    \end{align*}
We call the triple $(\rho_-,~\rho_+,~ C)$ as the distortion of the coarse equivalence. 
    
In the case when the family $\mathcal{X}$ is uniformly coarsely equivalent to $\mathcal{Y}$, the family $\mathcal{X}$ satisfies the quantitative coarse Baum--Connes conjecture, if  $\mathcal{Y}$ satisfies it.
\end{remark}

\begin{remark}\label{generality of qcbc}
    The quantitative coarse Baum--Connes conjecture can be reformulated as follows \cite[Theorem~4.15]{zhang2024QuantitativeCoarseBaumConnes}. For a metric space $X$, let $\bigsqcup X$ denote the disjoint union of countably many copies of $X$. We define the metric on $\bigsqcup X$ such that the distance between points lying in different copies is infinite. Then $X$ satisfies the quantitative coarse Baum--Connes conjecture if and only if $\bigsqcup X$ satisfies the coarse Baum--Connes conjecture. In particular, this implies that the quantitative coarse Baum--Connes conjecture holds for all uniformly locally finite metric spaces $X$ which admits a coarse embedding into a Hilbert space. More generally, since the quantitative coarse Baum--Connes conjecture is equivalent to the coarse Baum--Connes conjecture with coefficient $\ell^{\infty}(\mathbb{N},\mathcal{K})$, it holds for all metric space $X$ which admits a coarse fibration 
    \begin{align*}
        N\hookrightarrow X \twoheadrightarrow Q
    \end{align*}
    with $N$ and $Q$ admitting a coarse embedding into a Hilbert space \cite[Theorem~1.3]{DengGuo2024TwistedRoeAlgebras}.
\end{remark}

\subsection{Decompositions of spaces and quantitative estimates}
In this subsection, we apply the framework of quantitative $K$-theory to geometric decompositions of spaces. This allows us to reduce the problem for spaces with the lower complexity. Similar ideas can be found in \cite{GuentnerWillettYu2024DynamicalComplexity} and \cite{DengToyota2025twisted}.

Fix $k_0\in \mathbb{N}$.
    Let $(X,d)$ be a uniformly locally finite metric space and assume that $\omega=(Y_0,Y_1)$ is a decomposition of $X$. (i.e. $Y_0$ and $Y_1$ are subspaces of $X$ with $X=Y_0\cup Y_1$.) Let $R_r$ be an increasing function of $r$ such that $R_r> (k_0+1)r$. We decompose $Y_i=\bigsqcup_j Y_i^{j}$ into $3R_r$-separated subsets.
Then for $i=0,1$ we have
\begin{align*}
    N_r(P_{k_0}(Y_i))=\bigsqcup_j N_r(P_{k_0}(Y_i^{j})),
\end{align*}
where $N_r$ denotes the $r$-neighborhood in $P_{k_0}(X)$, and the above disjoint union is $R_r$-separated. We define the new distance $d^{R,r}_i$ on $ N_r(P_{k_0}(Y_i))=\bigsqcup_j N_r(P_{k_0}(Y_i^{j}))$ by 
\begin{align*}
    d_i^{R,r}(x,y)=\left\{\begin{array}{ll}
        d(x,y) & x\in N_r(P_{k_0}(Y_i^{j})), ~ y\in N_r(P_{k_0}(Y_i^{j})) \text{ for some }j   \\
         \infty & \text{otherwise.}
    \end{array} \right.
\end{align*}
We denote by $P_{k_0}(Y_i)^{+r}$ the space $N_r(P_{k_0}(Y_i))$ equipped with the distance $d_i^{R,r}$.
We define filtered subspaces of $C^*_{L,0}(P_{k_0}(X))$ by 
\begin{equation*}
    \begin{aligned}
        B_{r}^{X,R,\omega}&:=C^*_{L,0}(P_{k_0}(Y_0)^{+r})_r+C^*_{L,0}(P_{k_0}(Y_1)^{+r})_r,\\
        I_{r}^{X,R,\omega}&:=C^*_{L,0}(P_{k_0}(Y_0)^{+r})_r,\\
        J_{r}^{X,R,\omega}&:=C^*_{L,0}(P_{k_0}(Y_1)^{+r})_r.
    \end{aligned}
\end{equation*}
Consider the $C^*$-subalgebra $B^{X,R,\omega}:=\overline{\bigcup_{r\geq 0}B_r^{X,R,\omega}}$ of $C^*_{L,0}(P_{s_0}(X))$ and two ideals $I^{X,R,\omega}:=\overline{\bigcup_{r\geq 0}I_r^{X,R,\omega}(X)}$, and $J^{X,R,\omega}:=\overline{\bigcup_{r\geq 0}J_r^{X,R,\omega}(X)}$ of $B^{X,R,\omega}$.

\begin{lemma}\label{R,w excisive}
The family $\left(I^{X,R,\omega},J^{X,R,\omega};B^{X,R,\omega}\right)_{X,R,\omega}$ is uniformly excisive, where $X$ runs over all uniformly locally finite metric spaces, $\omega$ runs over all decompositions $\omega=(Y_0,Y_1)$ of $X$ and $R$ runs over all increasing functions of $r$ satisfying $R_r> 3(k_0+1)r$.
\end{lemma}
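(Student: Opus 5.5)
We verify conditions (i) and (ii) of Definition~\ref{definition of excisive} directly, using cutoff operators by characteristic functions. Stabilizing by $\mathcal{K}$ changes nothing, since all constructions act on the $\ell^2(Z_0)$ factor and commute with $\mathcal{K}$; so we argue for the algebras themselves. The estimates we produce will depend only on $r_0$, $m_0$, $\varepsilon$ and $k_0$, and not on $X$, $\omega$ or $R$. This independence is what gives uniformity.

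\textbf{Condition (i).} Let $a\in B_{r_0}^{X,R,\omega}$ with $\|a\|\le m_0$. By the definition of $B_{r_0}$, we can write $a=b'+c'$ with $b'\in I_{r_0}$ and $c'\in J_{r_0}$. The difficulty is that a sum decomposition need not satisfy norm bounds. To fix this, we cut $a$ directly. Put $W_0:=N_{r_0}(P_{k_0}(Y_0))$ and let $\chi_0$ be its characteristic function. Then
\[
b:=\chi_0 a,\qquad c:=(1-\chi_0)a .
\]
Both have norm at most $m_0$ and propagation at most $r_0$ for $t\ge 1$. The support of $c$ lies in the region where $a$ is nonzero off $W_0$, and there only $c'$ contributes. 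That region is within $r_0$ of $P_{k_0}(Y_1)$, so $c$ is supported in $N_{2r_0}(P_{k_0}(Y_1))$.

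\textbf{Checking that $b,c$ lie in the right filtered pieces.} Take $r=2r_0$. We must check that $b$ lies in $C^*_{L,0}(P_{k_0}(Y_0)^{+r})_r$. Two things need verifying: each matrix entry pairs points within the same piece $N_r(P_{k_0}(Y_0^j))$, and the entries over $[0,1]$ are handled. The first holds because the pieces are $R_r$-separated with $R_r>3(k_0+1)r>r_0$, so an entry of propagation below $r_0$ cannot join two different pieces. For the second, the filtration only constrains $t\ge1$, but membership in the subalgebra requires support in the neighborhood for all $t$. We handle this by cutting uniformly in $t$, replacing $\chi_0$ by the characteristic function of a larger neighborhood where necessary. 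The same argument applies to $c$. Hence $m=m_0$, and the approximation is exact.

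\textbf{Condition (ii).} Let $a\in I\cap J$ with $\|a-b_1\|<\delta$ and $\|a-c_1\|<\delta$, where $b_1\in I_{r_0}$ and $c_1\in J_{r_0}$. Let $\chi$ be the characteristic function of $N_{r_0}(P_{k_0}(Y_0))\cap N_{r_0}(P_{k_0}(Y_1))$, and set
\[
b:=\chi\, b_1\, \chi .
\]
Since $b_1$ is supported near $Y_0$ and $c_1$ near $Y_1$, and $a\in I\cap J$, we expect $\|a-\chi a\chi\|\lesssim 2\delta$. This estimate is the main obstacle. Elements of $I$ are only limits of operators supported in $N_r(P_{k_0}(Y_0))$ for varying $r$, so we need the estimate $\|(1-\chi_{W_0})a\|\le\|(1-\chi_{W_0})(a-b_1)\|<\delta$. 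We obtain it from $(1-\chi_{W_0})b_1=0$, which holds by the support of $b_1$, once $W_0$ is taken large enough to contain the supports; the symmetric estimate holds for $Y_1$. Then $\chi b_1\chi$ is within about $3\delta$ of $a$. It lies in both $I_{r}$ and $J_{r}$, with $r=2r_0$, by the separation argument used in (i). Choosing $\delta=\varepsilon/3$ completes the proof, and again all constants are independent of $X$, $\omega$ and $R$.
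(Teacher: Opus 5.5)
Your strategy is the paper's: cut off by characteristic functions of $W_0=N_{r_0}(P_{k_0}(Y_0))$ and $W_1=N_{r_0}(P_{k_0}(Y_1))$. However, one step in (i) fails: your treatment of $t\in[0,1]$. Write $a=b'+c'$. Your $c=(1-\chi_0)a=(1-\chi_0)c'$ is fine for every $t$, because it inherits its support in $W_1\times W_1$ and its block structure from $c'$. The problem is $b=b'+\chi_0c'$. The columns of $\chi_0c'(t)$ range over the whole support of $c'(t)$. Only the bound $\mathrm{propagation}(c'(t))<r_0$ brings them into $N_{2r_0}(P_{k_0}(Y_0))$, and the filtration supplies that bound only for $t\ge 1$. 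Replacing $\chi_0$ by the indicator of a larger neighbourhood changes which rows you keep, not where the columns lie. So no enlargement helps, and ``$m=m_0$, exact decomposition'' is not justified as written. The repair is to cut on both sides, as the paper does: take $b:=\chi_0a\chi_0$ and $c:=a-b=(1-\chi_0)c'+\chi_0c'(1-\chi_0)$, using $(1-\chi_0)b'=b'(1-\chi_0)=0$. These are supported in $W_0\times W_0$ and $W_1\times W_1$ for all $t$, with $\|b\|\le m_0$ and $\|c\|\le 2m_0$. They are exactly the paper's $a_0,a_1$, since $Z_0\cup Z_{0,1}=W_0$.

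Your (ii) is a valid alternative to the paper's partition-of-unity element $a_0\mu_1+a_1\mu_0$. Since $b_1=\chi_{W_0}b_1\chi_{W_0}$, you have $\chi b_1\chi=\chi_{W_1}b_1\chi_{W_1}$. From $(1-\chi_{W_1})c_1=c_1(1-\chi_{W_1})=0$ you get $\|a-\chi_{W_1}a\chi_{W_1}\|<2\delta$, which gives your $3\delta$; the paper gets $2\delta$. The advantage of compressing is that the support of $b$ stays in $(W_0\cap W_1)\times(W_0\cap W_1)$ for every $t$. By contrast, the paper needs the propagation of $a_0$ to put the rows of $a_0\mu_1$ near $Y_1$.

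One caveat applies to both your argument and the paper's. Block-diagonality with respect to the other family of pieces follows from the separation only where propagation is below $r_0$, i.e.\ for $t\ge 1$. This concerns $\chi_0c'\chi_0$ relative to the pieces of $Y_0$ in (i), and $\chi b_1\chi$ relative to those of $Y_1$ in (ii). The paper tacitly uses the propagation bound for all $t$. If you want the literal filtration, compose with the block compression $T\mapsto\sum_j\chi_{V_j}T\chi_{V_j}$ onto the relevant pieces $V_j$. It is contractive, commutes with the cutoffs, fixes elements of the corresponding ideal and does not increase propagation, so the estimates survive with slightly worse constants. Conversely, if propagation is controlled for all $t$, your one-sided cut already works (giving $b\in I_{2r_0}$) and the $[0,1]$ paragraph is unnecessary.
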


\begin{proof}
For given $m_0, \varepsilon, r_0$ in Definition~\ref{definition of excisive}, we prove that $m=4m_0$, $\delta=\frac{1}{2}\varepsilon$ and $r=r_0+1$ work.

We first verify the first condition in Definition~\ref{definition of excisive}. Note that $P_{k_0}(X)\subset N_{1}(P_{k_0}(Y_0)) \cup N_{1}(P_{k_0}(Y_1))$. We define 
\begin{align*}
    Z_0&:=N_{r_0}(P_{k_0}(Y_0)) \setminus N_{r_0}(P_{k_0}(Y_1)),\\
    Z_1&:=N_{r_0}(P_{k_0}(Y_1)) \setminus N_{r_0}(P_{k_0}(Y_0)),\\
    Z_{0,1}&:=N_{r_0}(P_{k_0}(Y_0)) \cap N_{r_0}(P_{k_0}(Y_1)).
\end{align*}
Note that $d(Z_0, Z_1)> r_0$. For any $a\in B_{r_0}^{X,R,\omega}$, we define
    \begin{align*}
        a_0&:=\chi_{Z_0} a \chi_{Z_0} + \chi_{Z_0} a \chi_{Z_{0,1}}+ \chi_{Z_{0,1}} a \chi_{Z_0} + \chi_{Z_{0,1}} a \chi_{Z_{0,1}},\\
        a_1&:=\chi_{Z_1} a \chi_{Z_1} + \chi_{Z_1} a \chi_{Z_{0,1}}+ \chi_{Z_{0,1}} a \chi_{Z_1}.
    \end{align*}
    Since $a$ has propagation less than $r_0$, we have $a=a_0+a_1$. In addition, $a_0$ and $a_1$ have propagation less than $r_0$, $a_0$ is supported on $N_{r_0}(P_{k_0}(Y_0))\times N_{r_0}(P_{k_0}(Y_0))$ and $a_1$ is supported on $N_{r_0}(P_{k_0}(Y_1))\times N_{r_0}(P_{k_0}(Y_1))$. Moreover we have
    \begin{align*}
        \|a_0\|\leq 4\|a\|,~\|a_1\|\leq 3\|a\|.
    \end{align*}

    Next, we prove the second condition. Let $\{\mu_0, \mu_1\}$ be a partition of unity subordinate to the cover $P_{k_0}(Y_0)^{+1}\cup P_{k_0}(Y_1)^{+1}$. Let $a\in I^{X,R,\omega}\otimes \mathcal{K}\cap J^{X,R,\omega}\otimes \mathcal{K}$ such that $\|a-a_0\|\leq \delta$ and $\|a-a_1\|\leq \delta$ for some $a_0\in \left(I^{X,R,\omega}\right)_{r_0}$ and $a_1\in \left(J^{X,R,\omega}\right)_{r_0}$. Define
    \begin{align*}
        b:=a_0\mu_1+a_1\mu_0.
    \end{align*}
    Then since the multiplication by $\mu_1$ does not increase the support, 
    \begin{align*}
        \supp{(a_0\mu_1)}\subset \supp(a_0)\subset N_{r_0}(P_{k_0}(Y_0))\times N_{r_0}(P_{k_0}(Y_0)).
    \end{align*}
   Since $\mu_1$ is supported on $N_1(P_{k_0}(Y_1))$, and $a_0$ has propagation less than $r_0$, we obtain that
    \begin{align*}
        \supp{(a_0\mu_1)}\subset N_{r_0+1}(Y_1) \times N_{1}(Y_1)\subset N_{r_0+1}(Y_1) \times N_{r_0+1}(Y_1) 
    \end{align*}
    By arguing similarly for the support of $a_1\mu_0$, we can show that 
\begin{align*}
    \supp(b) \subset \left( N_{r_0+1}(Y_0) \times N_{r_0+1}(Y_0)\right)\cap\left( N_{r_0+1}(Y_1) \times N_{r_0+1}(Y_1)\right)
\end{align*}
    and so $b\in I^{X,R,\omega}_{r_0+1}\cap J^{X,R,\omega}_{r_0+1}$.
    Moreover, we have 
    $$\|a-b\|=\|(a-a_0)\mu_1+(a-a_1)\mu_0\|\leq 2 \delta=\varepsilon.$$
\end{proof}

In the following, we define another triple $(I^{X,R,\omega,k},J^{X,R,\omega,k};B^{X,R,\omega,k})$ consisting of a filtered $C^*$-algebra and its ideals, which encodes increments of the parameter $k$ of the Rips complex.

For $k\geq k_0$, we define
\begin{align*}
    B_r^{X,R,\omega,k}&:=C^*_{L,0}(P_{k_0}(Y_0)^{+r})_r+C^*_{L,0}(P_{k_0}(Y_1)^{+r})_r +C^*_{L,0}\left(P_{k}\left(\bigsqcup_j Y_0^{(j)}\right)^{+r}\cap P_{k}\left(\bigsqcup_j Y_1^{(j)}\right)^{+r}\right)_{kr},\\
        I_r^{X,R,\omega,k}&:=C^*_{L,0}(P_{k_0}(Y_0)^{+r})_r +C^*_{L,0}\left(P_{k}\left(\bigsqcup_j Y_0^{(j)}\right)^{+r}\cap P_{k}\left(\bigsqcup_j Y_1^{(j)}\right)^{+r}\right)_{kr},\\
        J_r^{X,R,\omega,k}&:=C^*_{L,0}(P_{k_0}(Y_1)^{+r})_r +C^*_{L,0}\left(P_{k}\left(\bigsqcup_j Y_0^{(j)}\right)^{+r}\cap P_{k}\left(\bigsqcup_j Y_1^{(j)}\right)^{+r}\right)_{kr},
\end{align*}
where the Rips complex $P_{k}\left(\bigsqcup_j Y_i^{(j)}\right)$ is taken in terms of the separated coarse disjoint union $\bigsqcup_j Y_i^{(j)}$ and the $r$-neighborhood $P_{k}\left(\bigsqcup_j Y_i^{(j)}\right)^{+r}$ is taken relative to $P_{k_0}(X)$, i.e.
\begin{equation}\label{relative expansion}
    P_{k}\left(\bigsqcup_j Y_0^{(j)}\right)^{+r}=P_{k}\left(\bigsqcup_j Y_0^{(j)}\right)\cup \left\{p\in P_{k_0}(X);d\left(p,P_k\left(\bigsqcup_j Y_0^{(j)}\right)\cap P_{k_0}(X)\right)\leq r\right\}.
\end{equation} 
Then define a $C^*$-algebras $B^{X,R,\omega,k}:=\overline{\bigcup_{r\geq 0}B_r^{X,R,\omega,k}}$ and two ideals $I^{X,R,\omega,k}:=\overline{\bigcup_{r\geq 0}I_r^{X,R,\omega,k}(X)}$ and $J^{X,R,\omega,k}:=\overline{\bigcup_{r\geq 0}J_r^{X,R,\omega,k}(X)}$ of $B^{X,R,\omega,k}$.

By decomposing the space into 
\begin{align*}
    Z_0&:=P_{k_0}(Y_0)^{+r}\setminus\left( P_{k}\left(\bigsqcup_j Y_0^{(j)}\right)^{+r}\cap P_{k}\left(\bigsqcup_j Y_1^{(j)}\right)^{+r}\right)\\
    Z_1&:=P_{k_0}(Y_1)^{+r}\setminus \left(P_{k}\left(\bigsqcup_j Y_0^{(j)}\right)^{+r}\cap P_{k}\left(\bigsqcup_j Y_1^{(j)}\right)^{+r}\right)\\
    Z_{0,1}&:=P_{k}\left(\bigsqcup_j Y_0^{(j)}\right)^{+r}\cap P_{k}\left(\bigsqcup_j Y_1^{(j)}\right)^{+r},
\end{align*}
the same argument as Lemma~\ref{R,w excisive} shows the following lemma.
\begin{lemma}\label{k,w,R-excisive pair}
The family $\left(I^{X,R,\omega,k},J^{X,R,\omega,k};B^{X,R,\omega,k}\right)_{X,R,\omega,k}$ is uniformly excisive, where $X$ runs over all uniformly locally finite metric spaces, $R$ runs over all increasing functions of $r$ satisfying $R_r> 3(k_0+1)r$, $\omega$ runs over all decompositions $\omega=(Y_0,Y_1)$ of $X$ and $k$ runs over all numbers $k\geq k_0$.
\end{lemma}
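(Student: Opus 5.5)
We mimic the proof of Lemma~\ref{R,w excisive}, with the three-piece decomposition $Z_0,Z_1,Z_{0,1}$ displayed just before the statement replacing the earlier one. Given $r_0,m_0,\varepsilon$, we look for constants independent of $X,R,\omega,k$. Since $I$ and $J$ both carry the $kr$-filtered summand supported on $Z_{0,1}$, we take the output radius to be something like $r=k(r_0+1)$, or simply $r=r_0+1$ if the $kr$ term already absorbs propagation $k r_0$. We take $m=4m_0$ and $\delta=\varepsilon/2$.

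\emph{Condition (i).} Take $a\in (B^{X,R,\omega,k}\otimes\mathcal{K})_{r_0}$ with $\|a\|\le m_0$ and set
\[a_0=\chi_{Z_0}a\chi_{Z_0}+\chi_{Z_0}a\chi_{Z_{0,1}}+\chi_{Z_{0,1}}a\chi_{Z_0}+\chi_{Z_{0,1}}a\chi_{Z_{0,1}},\]
with $a_1$ defined as before. We then check three things.
\begin{enumerate}[(1)]
\item $a=a_0+a_1$. This uses that no nonzero matrix entry of $a$ connects $Z_0$ to $Z_1$. Summands of $a$ coming from $C^*_{L,0}(P_{k_0}(Y_i)^{+r_0})_{r_0}$ have support in a single $N_{r_0}(P_{k_0}(Y_i^j))$. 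These sets are $R_{r_0}$-separated, with $R_{r_0}>3(k_0+1)r_0$, so such summands cannot jump between the pieces. The summand supported in $Z_{0,1}$ is handled trivially.
\item Each corner-cut piece is supported in the correct neighborhood. Pieces touching $Z_{0,1}$ land in $I$ and in $J$ through the third summand, and the remaining pieces land in the $k_0$-summands.
\item Propagation does not increase, because multiplying by characteristic functions never increases propagation.
\end{enumerate}
The norm estimates $\|a_0\|\le 4\|a\|$ and $\|a_1\|\le 3\|a\|$ hold exactly as before.

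\emph{Condition (ii).} Choose a partition of unity $\{\mu_0,\mu_1\}$ subordinate to the $1$-neighborhoods of the two pieces. Given $a$ in the intersection with $\|a-a_0\|,\|a-a_1\|<\delta$, where $a_0\in I_{r_0}$ and $a_1\in J_{r_0}$, put $b=a_0\mu_1+a_1\mu_0$. Then $\|a-b\|<2\delta=\varepsilon$. We must also show that $a_0\mu_1$ lies in $I_{r}\cap J_{r}$, and we treat its two parts separately.
\begin{itemize}
\item The part of $a_0$ supported near $P_{k_0}(Y_0)$, multiplied by $\mu_1$, is supported within $r_0+1$ of both $Y_0$ and $Y_1$. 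It therefore lies in the intersection neighborhood, which is contained in $Z_{0,1}$ with enlarged radius because the $k$-Rips complexes contain the $k_0$ ones.
\item The $Z_{0,1}$ part is already in both ideals.
\end{itemize}

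The main obstacle is the bookkeeping of supports under the relative neighborhood convention \eqref{relative expansion}. The crux is showing that the $(r_0+1)$-neighborhoods of $P_{k_0}(Y_0)$ and $P_{k_0}(Y_1)$ intersect inside $P_k(\bigsqcup_j Y_0^{(j)})^{+r}\cap P_k(\bigsqcup_j Y_1^{(j)})^{+r}$, with a radius uniform in $k$. We would handle this using $k\ge k_0$ and the fact that the separation of the pieces makes the $Y_i^{(j)}$-pieces coincide locally.
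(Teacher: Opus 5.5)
Your strategy is the paper's own. The paper proves this lemma in one line, by rerunning the proof of Lemma~\ref{R,w excisive} with the decomposition $Z_0,Z_1,Z_{0,1}$. The problem is that the reason you give for $a=a_0+a_1$ in (i) does not establish it.

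Separation of the sets $N_{r_0}(P_{k_0}(Y_i^j))$ only shows that each $k_0$-summand, and each of its cut-downs, is block diagonal in $j$. It does not by itself rule out a $Y_0$-summand having entries between $Z_0$ and $Z_1$. What is needed is the containment
\[
P_{k_0}(Y_0)^{+r}\cap P_{k_0}(Y_1)^{+r}\subseteq P_k\Bigl(\bigsqcup_j Y_0^{(j)}\Bigr)^{+r}\cap P_k\Bigl(\bigsqcup_j Y_1^{(j)}\Bigr)^{+r}=Z_{0,1},
\]
which you only invoke later, in (ii). It follows at once from $P_{k_0}(Y_i)=\bigsqcup_j P_{k_0}(Y_i^{(j)})\subseteq P_k(\bigsqcup_j Y_i^{(j)})\cap P_{k_0}(X)$, together with $k\ge k_0$ and \eqref{relative expansion}; the equality is the one already used in the setup.

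This containment does three things in (i). It makes $Z_0,Z_1,Z_{0,1}$ pairwise disjoint. It gives $Z_1\cap P_{k_0}(Y_0)^{+r_0}=\emptyset=Z_0\cap P_{k_0}(Y_1)^{+r_0}$, so the $Z_0$--$Z_1$ corners of all three summands of $a$ vanish. It also fixes your routing in step (2). The corners $\chi_{Z_0}a\chi_{Z_{0,1}}$ and $\chi_{Z_{0,1}}a\chi_{Z_0}$ are not supported in $Z_{0,1}\times Z_{0,1}$, so they cannot lie in the third summand. Instead they coincide with the corresponding corners of the $Y_0$-summand alone, and therefore lie in $C^*_{L,0}(P_{k_0}(Y_0)^{+r_0})_{r_0}\subseteq I_{r_0}$. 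Only $\chi_{Z_{0,1}}a\chi_{Z_{0,1}}$ goes through the third summand.

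You must also commit to $r=r_0+1$; in fact $r=r_0$ already suffices for (i). A choice like $r=k(r_0+1)$ is not admissible, because $k$ belongs to the index set and uniform excisiveness requires $r,m,\delta$ to be independent of it. The choice $r_0+1$ works precisely because the third summand is filtered at scale $kr$:
\begin{itemize}
\item in (i), $\chi_{Z_{0,1}}a\chi_{Z_{0,1}}$ has propagation below $kr_0$, so it sits at filtration level $r_0$;
\item in (ii), every piece of $a_0\mu_1$ and $a_1\mu_0$ has propagation below $kr_0$, and by the containment above it is supported in $Z_{0,1}$ at radius $r_0+1$.
\end{itemize}
Finally, in (ii) the partition of unity must also satisfy $\mu_0+\mu_1=1$ on the part of $Z_{0,1}$ lying outside $P_{k_0}(X)$. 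Any choice there is harmless, since everything supported in $Z_{0,1}$ already lies in both ideals.
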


For each $r$, we regard the value $r_2$ obtained from Theorem~\ref{mayer-vietoris} for the excisive pair in Lemma~\ref{k,w,R-excisive pair} as a function $r_2(r)$ of $r$, and define $R_r:=3(k_0+1)r_2(r)$. 

\begin{lemma}\label{one step decomposition}
Let $k_0 > 0$ and $r > 0$. Let $\mathcal{X}$ be a family of uniformly locally finite metric spaces. Suppose each $X \in \mathcal{X}$ admits a decomposition $X = Y_0(X) \cup Y_1(X)$ where each $Y_i(X)$ is a $3R_r$-separated disjoint union
$$
Y_i(X) = \bigsqcup_j Y_i(X)^j.
$$
Assume that for $i = 0,1$ and all $s \leq R_r$, the families
$$
\{Y_i(X)^j\}_{X,j} \quad\text{and}\quad \{N_s(Y_0(X)^j) \cap N_s(Y_1(X)^{j'})\}_{X,j,j'}
$$
satisfy the quantitative coarse Baum--Connes conjecture.

Then there exist $k\geq k_0$ and $r'\geq r$ such that the map
$$
K_*^{\varepsilon,r}(C^*_{L,0}(P_{k_0}(X))) \longrightarrow K_*^{\varepsilon,r'}(C^*_{L,0}(P_k(X)))
$$
is the zero map for sufficiently small $\varepsilon$ and for every $X \in \mathcal{X}$.
\end{lemma}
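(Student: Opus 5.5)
Fix $k_0$, $r$ and $\varepsilon$ small enough that the controlled boundary map of Theorem~\ref{mayer-vietoris}, applied to the uniformly excisive family of Lemma~\ref{k,w,R-excisive pair}, stays in the range where $K^{\varepsilon,\cdot}_*$ is defined. The argument is a controlled Mayer--Vietoris argument in two stages. First compare $C^*_{L,0}(P_{k_0}(X))$ with $B^{X,R,\omega}$. Since $R_r>(k_0+1)r$ and $P_{k_0}(X)\subset N_1(P_{k_0}(Y_0))\cup N_1(P_{k_0}(Y_1))$, the argument of the first part of Lemma~\ref{R,w excisive} shows that every element of $C^*_{L,0}(P_{k_0}(X))_r$ can be cut as $a=a_0+a_1$. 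The pieces are supported near $Y_0$ and $Y_1$ respectively. Because the components $Y_i^j$ are $3R_r$-separated, each $a_i$ in fact lies in $C^*_{L,0}(P_{k_0}(Y_i)^{+r})_r$, so it has finite $d_i^{R,r}$-propagation. Hence each class $x\in K^{\varepsilon,r}_*(C^*_{L,0}(P_{k_0}(X)))$ is the image of a class in $K^{\varepsilon,r}_*(B^{X,R,\omega})$. To see this I would realize the quasi-projection or quasi-unitary $x$ directly as an element of $B^{X,R,\omega}_r$, since $B_r$ and $C^*_{L,0}(P_{k_0}(X))_r$ agree at this scale, and then include it into $B^{X,R,\omega,k}$ for any $k\ge k_0$.

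Next apply Theorem~\ref{mayer-vietoris} to $(I^{X,R,\omega,k},J^{X,R,\omega,k};B^{X,R,\omega,k})$. This gives $\partial_c(x)\in K^{\varepsilon,r_1}_*(I\cap J)$. The intersection $I\cap J$ is essentially $C^*_{L,0}$ of the overlap $P_{k}(\bigsqcup_j Y_0^{(j)})^{+r}\cap P_k(\bigsqcup_j Y_1^{(j)})^{+r}$. At Rips scale $k_0$ this overlap is coarsely a disjoint union, with distances infinite between distinct pieces, of the spaces $N_s(Y_0^j)\cap N_s(Y_1^{j'})$ with $s\le R_r$. By the hypothesis and Lemma~\ref{characterization of qcbc}, there is $k_1$ such that the image of $\partial_c(x)$ in the Rips complex of parameter $k_1$ vanishes at the same $\varepsilon$, uniformly over $X,j,j'$. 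Choosing $k\ge k_1$, the naturality part of Theorem~\ref{mayer-vietoris}, applied to the inclusion $B^{X,R,\omega,k_0}\subset B^{X,R,\omega,k}$, shows that $\partial_c(x)=0$ in the bigger algebra. This is exactly why the third summand with propagation $kr$ was built into $B^{\cdot,k}$. Theorem~\ref{mayer-vietoris} then writes $x=y+z$ in $K^{\varepsilon,r_2}_*(B^{X,R,\omega,k})$, with $y\in K^{\varepsilon,r_2}_*(I^{X,R,\omega,k})$ and $z\in K^{\varepsilon,r_2}_*(J^{X,R,\omega,k})$.

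Finally, $I^{X,R,\omega,k}$ lives on $P_{k}(\bigsqcup_j Y_0^{(j)})^{+r}$. Since $R_r=3(k_0+1)r_2(r)$, this space at propagation $r_2$ is again a coarse disjoint union of pieces uniformly coarsely equivalent to the $Y_0^j$, and likewise for $J$ and the $Y_1^j$. Applying Lemma~\ref{characterization of qcbc} to the family $\{Y_i(X)^j\}$ with control $(\varepsilon,r_2)$ gives $k'\ge k$ and $r'$ such that $y$ and $z$ vanish in $K^{\varepsilon,r'}_*(C^*_{L,0}(P_{k'}(X)))$. Pushing forward along $B^{X,R,\omega,k}\to C^*_{L,0}(P_{k'}(X))$ then kills $x$, and we rename $k'$ as $k$.

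I expect the main obstacle to be the bookkeeping in the middle step, that is, identifying $K_*$ of $I\cap J$ (and of $I$, $J$) with the obstruction groups of Rips complexes of the families in the hypothesis. The issues are the relative neighborhoods in \eqref{relative expansion}, the mixed Rips parameters $k_0$ and $k$, and the need for the separation $R_r$ to dominate $r_2$, so that propagation-$r_2$ operators do not see distinct components. I would first try to handle this by a uniform coarse equivalence, with uniformly controlled distortion, between these neighborhoods and the Rips complexes $P_{k}$ of the pieces $Y_i^j$ and $N_s(Y_0^j)\cap N_s(Y_1^{j'})$, and then invoke the remark on invariance under uniform coarse equivalence.
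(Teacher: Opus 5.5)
Your proposal is correct and follows essentially the same route as the paper. You lift $x$ to $K^{\varepsilon,r}_*(B^{X,R,\omega})$, apply the controlled Mayer--Vietoris boundary with naturality into $B^{X,R,\omega,k}$, kill $\partial_c(x)$ using the hypothesis on the intersections $N_s(Y_0(X)^j)\cap N_s(Y_1(X)^{j'})$ (with the propagation-$kr$ summand absorbing the enlargement of the Rips parameter), and then kill $y$ and $z$ using the hypothesis on $\{Y_i(X)^j\}$ together with $R_r=3(k_0+1)r_2(r)$. The only difference is cosmetic: you take $B^{X,R,\omega,k_0}$ as the source of the naturality square, inside the single family of Lemma~\ref{k,w,R-excisive pair}, whereas the paper uses $B^{X,R,\omega}$ from Lemma~\ref{R,w excisive}.
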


\begin{proof}
Let $x\in K^{\varepsilon,r}_*(C^*_{L.0}(P_{k_0}(X)))$. Since 
    \begin{align*}
        C^*_{L,0}(P_{k_0}(X))_{r}\subset C^*_{L,0}(P_{k_0}(Y_0)^{+r_0})_{r} + C^*_{L,0}(P_{k_0}(Y_1)^{+r_0})_{r}=B_{r}^{X,R,\omega},
    \end{align*}
    the element $x$ can be viewed as an element in $K_*^{\varepsilon,r}(B^{X,R,\omega})$. By Theorem~\ref{mayer-vietoris}, there exists $r_1\geq r$ and the boundary maps $\partial$ which fit in the commutative diagram below for every $k_1\geq k_0$:
    \[ \begin{tikzcd}
K_*^{\varepsilon,r}(B^{X,R,\omega}) \arrow{r}{\partial} \arrow{d} &K_*^{\varepsilon,r_1}(I^{X,R,\omega}\cap J^{X,R,\omega}) \arrow{d} \\
K_*^{\varepsilon,r}(B^{X,R,\omega,k_1}) \arrow{r}{\partial}& K_*^{\varepsilon,(k_0+1)r_1}(I^{X,R,\omega,k_1}\cap J^{X,R,\omega,k_1}).
\end{tikzcd}
\]
We analyze the right vertical map.
Using
\begin{align*}
P_{k_0}(N_r(Y_i))\subset P_{k_0}(Y_i)^{+r}\subset
P_{k_0}(N_{(k_0+1)r}(Y_i)),
\end{align*}
it factors as a composition of
forgetful maps and inclusion-induced maps: (Notation~\ref{forgetful} and Notation~\ref{subspace inclusion}):
\begin{align*}
    K_*^{\varepsilon,r_1}(I^{X,R,\omega}\cap J^{X,R,\omega})&=K_*^{\varepsilon,r_1}\left(C^*_{L,0}\left(P_{k_0}(Y_0(X))^{+r_1}\cap P_{k_0}(Y_1(X))^{+r_1}\right) \right)\\
    &\to K_*^{\varepsilon,r_1}\left(C^*_{L,0}(P_{k_0}(N_{(k_0+1)r_1}(Y_0(X))\cap N_{(k_0+1)r_1}(Y_1(X))))\right)\\
    &\to K_*^{\varepsilon,k_1r_1}\left(C^*_{L,0}\left(P_{k_1}\left(\bigsqcup_{j,j'} N_{(k_0+1)r_1}(Y_0(X)^{(j)})\cap N_{(k_0+1)r_1}(Y_1(X)^{(j')})\right)\right)\right)\\
    &\to K_*^{\varepsilon,(k_0+1)k_1r_1}\left(C^*_{L,0}\left(P_{k_1}\left(\bigsqcup_{j,j'} N_{(k_0+1)r_1}(Y_0(X)^{(j)})\cap N_{(k_0+1)r_1}(Y_1(X)^{(j')})\right)\right)\right)\\
    &\to K_*^{\varepsilon,(k_0+1)r_1}\left(I^{X,R,\omega,k_1}\cap J^{X,R,\omega,k_1}\right),
\end{align*}
and this is the $0$-map for sufficiently large $k_1$ which is independent of $X\in \mathcal{X}$ and $j,j'$. By the controlled exactness, there exist $y\in K_*^{\varepsilon,r_2}(I^{X,R,\omega,k_1})$ and $z\in K_*^{\varepsilon,r_2}(J^{X,R,\omega,k_1})$ such that $x=y+z \in K_*^{\varepsilon,r_2}(B^{X,R,\omega,k_1})$. Since 
\begin{align*}
    I^{X,R,\omega,k_1}_{r_2}\subset C^*_{L,0}\left(P_{k_1}\left(\bigsqcup_j Y_0^{(j)}(X)\right)^{+r_2}\right)_{k_1r_2}&\subset C^*_{L,0}\left(P_{k_1}\left(\bigsqcup_j N_{(k_0+1)r_2}(Y_0^{(j)}(X))\right)\right)_{k_1r_2}\\
    &=C^*_{L,0}\left(\bigsqcup_j P_{k_1}\left( N_{(k_0+1)r_2}(Y_0^{(j)}(X))\right)\right)_{k_1r_2},
\end{align*} 
by the definition \eqref{relative expansion} of the relative $r_2$-neighborhood of $P_{k_1}\left(\bigsqcup_j Y_0^{(j)}(X)\right)$
and $\{Y_{0,j}(X)\}$ satisfies the quantitative coarse Baum--Connes conjecture, we have $y=0$ in $K_*^{\varepsilon,r'}(C^*_{L,0}(P_k(X)))$ for sufficiently large $r'$ and $k$ which are independent of $X\in \mathcal{X}$. Also, the same thing holds for $z$.
Therefore, we have
\begin{align*}
    x=y+z=0 \in K_*^{\varepsilon,r'}\left(C^*_{L,0}\left( P_{k}\left( X\right)\right)\right).
\end{align*}
\end{proof}

This result plays a crucial role in the proof of our main result. Next, we shall establish decompositions for the free product $X \ast Y$ into neighborhoods of cosets of $X$ and $Y$. Using the above theorem, the quantitative coarse Baum–Connes conjecture for $X\ast Y$ reduces to that of $X$ and $Y$.

\section{Proof of the main theorem}

In this section, we complete the proof of Theorem~\ref{main theorem} by applying Theorem~\ref{one step decomposition} to suitable decompositions of free product. We briefly recall the definition of free product of metric spaces introduced in \cite{Fukaya-Matsuka:FreeProductOfcoarselyConvex}, focusing on the discrete case. Let $(X,d_X)$ and $(Y,d_Y)$ be uniformly locally finite metric spaces and we fix their base point $e_X\in X$ and $e_Y\in Y$. We denote by $\ell$ the length on $X\sqcup Y$ defined as
\begin{align*}
    \ell(v):=\left\{
    \begin{array}{cc}
      d_X(v,e_X),   &  \text{ if }v\in X,\\
      d_Y(v,e_Y),   & \text{ if } v\in Y.
    \end{array}
    \right.
\end{align*}
A word on $X\sqcup Y$ is a finite sequence
\begin{align*}
    w=w_1w_2\cdots w_n \quad \text{(} n\geq 0, w_i\in X\sqcup Y \text{)}
\end{align*}
such that 
\begin{align*}
    w_i\in X \Rightarrow w_{i+1}\in Y\\
    w_i\in Y \Rightarrow w_{i+1}\in X,
\end{align*}
where for $n=0$ we define $w=\phi$ to be the empty word. The free product $X*Y$ is the set of all words on $X\sqcup Y$ equipped with the distance $d$ defined as follows. Let $w=uw_1w_2\cdots w_m$ and $z=uz_1z_2\cdots z_n$ be any words where $u$ is their maximal common prefix.
If $w_1,z_1\in X$, or $w_1, z_1\in Y$, then we define their distance to be
\begin{align*}
d(w,z):=\left\{
\begin{array}{cc}
     \ell(w_m)+\sum\limits_{i=2}^{m-1} (1+\ell(w_i))+d_X(w_1,z_1)+\sum\limits_{i=2}^{n-1}(\ell(z_i)+1)+\ell(z_n)& \text{ if } w_1,z_1\in X\\
     \ell(w_m)+\sum\limits_{i=2}^{m-1} (1+\ell(w_i))+d_Y(w_1,z_1)+\sum\limits_{i=2}^{n-1}(\ell(z_i)+1)+\ell(z_n) & \text{ if } w_1,z_1\in Y
\end{array}    
\right.
\end{align*}
If $w_1\in X$ and $z_1\in Y$, then we define their distance to be
\begin{align*}
    d(w,z):=\ell(w_m)+\sum_{i=1}^{m-1} (1+\ell(w_i))+\sum_{i=1}^{n-1}(\ell(z_i)+1)+\ell(z_n).
\end{align*}
Note that if $(X,d_X)$ and $(Y,d_Y)$ are uniformly locally finite, then so is their free product.

We first construct a map from the free product to the Bass--Serre tree, and then consider decompositions of the Bass--Serre tree associated to the free product $X*Y$. However, as already noted in the introduction, the Bass--Serre tree $T$ has infinite vertex degree. Consequently, the preimage under this map of a bounded subset of $T$ consists of infinitely many copies of $X$ and $Y$ glued together in a highly intricate manner. An additional difficulty arises because the coarse Baum–Connes conjecture does not pass to subspaces. To overcome this, we decompose the free product directly, without relying on any information about subspaces.

We now recall the construction of the Bass--Serre tree $T$ associated with the free product $X*Y$. The vertex set of $T$ consists of the empty word $\phi$ and the following copies of $X$ and $Y$ in $X*Y$:
\begin{align*}
    \{wX:w \text{ is a word of }X\sqcup Y\text{ whose last letter does not belong to}X\},\\
    \{wY:w \text{ is a word of }X\sqcup Y\text{ whose last letter does not belong to}Y\}.
\end{align*}
The edge structure of $T$ is defined as follows:

\begin{enumerate}[i)]
    \item the empty word $\phi$ is connected to $\phi X$ and $\phi Y$,

    \item for a word $w\in X*Y$, whose last letter is in $Y$, $wX$ is connected to $wxY$ for all $x\in X$, and

    \item for a word $w\in X*Y$, whose last letter is in $X$, $wY$ is connected to $wyX$ for all $y\in Y$.
\end{enumerate}
Equipped with this graph structure, $T$ becomes a tree. (possibly with infinite degree), and we equip $T$ with the usual edge distance $d_T$.

Now we define a map $\pi:X*Y \to T$ by 
\begin{align*}
    \pi(w)=\left\{\begin{array}{cc}
       \phi,  & \text{if }w=\phi,  \\
        w'X, & \text{ if }w=w'x \text{ for a  word }w' \text{ and }x\in X,\\
        w'Y, & \text{ if }w=w'y \text{ for a  word }w' \text{ and }y\in Y.
    \end{array}
    \right.
\end{align*}
It is easy to see that $\pi$ is a contraction.

For any $n\in \mathbb{N}$, and $w\in X*Y$, we define $T^w_{n}$ as follows. If $w=w'x$ for some $x\in X$, then
\begin{align*}
    T^w_{n}:=\bigcup_{k=0}^{\lfloor \frac{n}{2}\rfloor}\{ w'x_1y_1\cdots x_{k}y_kX: x_i\in X,~y_i\in Y\} \cup \bigcup_{k=0}^{\lfloor \frac{n-1}{2}\rfloor}\{ w'x_1y_1\cdots x_{k}y_kx_{k+1}Y: x_i\in X,~y_i\in Y\}
\end{align*}
If $w=w'h$ for some $h\in H$, then  
\begin{align*}
    T^w_{n}:=\bigcup_{k=0}^{\lfloor \frac{n}{2}\rfloor}\{ w'y_1x_1\cdots y_{k}x_kY: x_i\in X,~y_i\in Y\} \cup \bigcup_{k=0}^{\lfloor \frac{n-1}{2}\rfloor}\{ w'y_1x_1\cdots y_{k}x_ky_{k+1}X: x_i\in X,~y_i\in Y\}
\end{align*}
The Figure~\ref{fig:subtree_of_depth n} is $T^w_n$ for $w=w'x$.
\begin{figure}[h]
\centering
\includegraphics[width=0.8\textwidth]{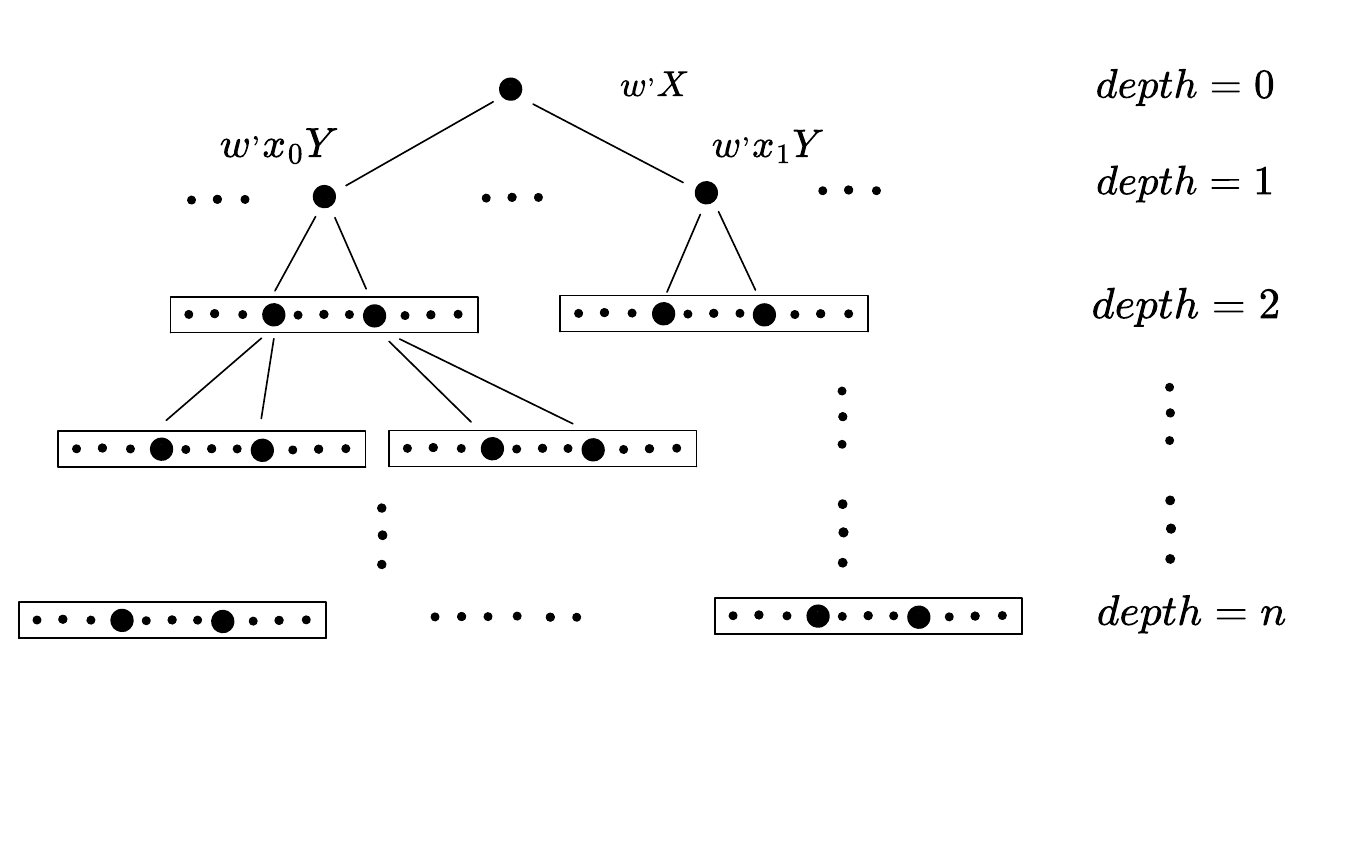}
\vspace{-15mm}
\caption{The subtree of depth $n$ rooted at $w'X$.}
\label{fig:subtree_of_depth n}
\end{figure}

\begin{lemma}\label{subtree}
Let $W\subseteq X*Y$ be any subset.
    If $X$ and $Y$ satisfy the quantitative coarse Baum--Connes conjecture, then so does the family $\{\pi^{-1}(T_n^w)\}_{w\in W}$ for every $n\in \mathbb{N}$.
\end{lemma}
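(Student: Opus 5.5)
We will argue by induction on $n$, using Lemma~\ref{one step decomposition} at each step. The idea is to decompose $\pi^{-1}(T^w_n)$ into its root coset together with the subtrees hanging off it. These pieces can be chosen to be whole cosets and whole subtrees, so no subspace of $X$ or $Y$ other than neighbourhoods of such pieces is ever needed. By the left-translation structure of the free product metric, $\pi^{-1}(T^w_n)$ for $w=w'x$ is isometric, via stripping the prefix $w'$, to $\pi^{-1}(T^{x}_n)$ computed at the root $\phi X$. Hence the family $\{\pi^{-1}(T^w_n)\}_{w\in W}$ is, up to isometry, contained in the two-element family $\{\pi^{-1}(T^{e_X}_n),\pi^{-1}(T^{e_Y}_n)\}$. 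So it suffices to prove the quantitative conjecture for these two spaces, uniformly in the sense needed for later families; the uniformity is automatic once we track that every piece arising below is isometric to one of finitely many model spaces, or to a family of such.

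For $n=1$, $\pi^{-1}(T^w_1)=w'X$ is isometric to $X$ (resp.\ $Y$), and we are done. For the inductive step, write $\pi^{-1}(T^w_n)=w'X\cup\bigcup_{x\in X}\pi^{-1}(T^{w'xe_Y}_{n-1})$. Each branch $B_x:=\pi^{-1}(T^{w'xe_Y}_{n-1})$ is isometric to a model space of depth $n-1$, attached to $w'X$ at the single point $w'x$ through an edge of length $1$.

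Given $r$, with $R_r$ as fixed before Lemma~\ref{one step decomposition}, we set $Y_1=\bigsqcup_x (B_x\setminus B(w'x;4R_r))$. Distinct branches are at distance at least $2+d_X(x,x')$ apart near their roots, and removing balls of radius $4R_r$ makes this union $3R_r$-separated. Each piece $B_x\setminus B(w'x;4R_r)$ is itself a disjoint union, separated by more than $3R_r$, of subtrees rooted at vertices of the branch. Each such subtree is $\pi^{-1}(T^{v}_{m})$ with $m\le n-1$, after a further bounded truncation at the root coset of the branch. To keep pieces as whole subtrees, we truncate along tree depth rather than metric balls: we take $Y_1$ to be the union of all $\pi^{-1}(T^v_m)$ with $v$ at tree-depth $\ge 2$ below the root, grouped by their depth-one ancestor, with a slight refinement explained in the next paragraph. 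We let $Y_0$ be $w'X$ together with the depth-one cosets, which form an intermediate layer.

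The main obstacle is that, unlike in a metric tree, the cosets at depth one are glued to $w'X$ at points $w'x$ that may be metrically close for different $x$. So $Y_0$ is not obviously a space whose family satisfies the conjecture, and the intersections $N_s(Y_0^j)\cap N_s(Y_1^{j'})$ must be controlled. My plan is to cut each depth-one coset $w'xY$ (resp.\ $w'yX$) metrically instead: keep the ball $B(w'xe_Y;4R_r)$ in $Y_0$, and put everything farther out into $Y_1$. Then $Y_0$ lies within a bounded neighbourhood of $w'X$, and since $X$ is uniformly locally finite, $Y_0$ is uniformly coarsely equivalent to $X$; by the Remark on invariance under coarse equivalence it satisfies the conjecture. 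The outer parts of distinct branches are then $3R_r$-separated, and each outer part is a disjoint union of complements of balls in $Y$ (resp.\ $X$) with subtrees of depth $\le n-1$ attached. Here the subspace problem reappears: complements of balls are not whole copies of $Y$. I would handle this by applying the same trick one level deeper. Alternatively, and preferably, I would note that the intersection $N_s(Y_0^j)\cap N_s(Y_1^{j'})$ is a bounded-width annulus with bounded trees attached, and is coarsely equivalent to a union of pieces indexed over $X$ that are uniformly bounded. Such pieces trivially satisfy the conjecture uniformly, since uniformly bounded families have asymptotic dimension zero. Thus the two hypotheses of Lemma~\ref{one step decomposition} reduce to the induction hypothesis together with bounded families, and the lemma, combined with Lemma~\ref{characterization of qcbc} (from the paper's earlier Lemma 3.22 characterisation), gives the conclusion for depth $n$. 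Verifying that the outer pieces genuinely are unions of depth-$(n-1)$ model spaces, rather than subspaces of them, is the step I expect to need the most care. I expect it to be exactly where the paper's later Lemma~\ref{tree setminus tree} is used.
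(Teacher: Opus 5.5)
Your root-versus-branches decomposition can be made to work, but as written it leaves open the one hypothesis of Lemma~\ref{one step decomposition} that carries the induction. You never show that the family of outer pieces $Y_1^x=B_x\setminus B(w'x;4R_r)$ satisfies the quantitative conjecture. You treat this as a subspace problem, and none of your remedies closes it:
\begin{itemize}
\item Going ``one level deeper'' only produces new ball complements at the next level.
\item The bounded-annulus observation concerns the \emph{other} hypothesis, namely the intersections $N_s(Y_0)\cap N_s(Y_1^x)$, not the pieces $Y_1^x$ themselves.
\item Invoking Lemma~\ref{tree setminus tree} would be circular, since the paper proves that lemma using Lemma~\ref{subtree}. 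It also concerns removing whole subtrees $T^w_n$, not metric balls.
\end{itemize}
Moreover, the statement you say you would verify is false. $Y_1^x$ is neither a union of depth-$(n-1)$ model spaces nor a separated union of whole subtrees: it contains the outer part of the coset $w'xY$, which joins the subtrees hanging from it, as well as truncated subtrees near $w'xe_Y$.

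The missing idea is simply that deleting a bounded set does not change coarse type. The set $B(w'x;4R_r)\cap B_x$ is bounded, so if $Y_1^x\neq\emptyset$ the isometric inclusion $Y_1^x\hookrightarrow B_x$ has coarsely dense image. Every $B_x$ is isometric to one of the two model spaces of depth $n-1$, with $w'xe_Y$ corresponding to the root point, so the density constant depends only on $R_r$ and the model, not on $x$ or $w$. By the invariance remark and the induction hypothesis, $\{Y_1^x\}_{w,x}$ satisfies the conjecture. This is exactly how the paper treats its pieces $w'Z\setminus B_{w'Z}(w'e_Z;R)$.

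With this one line your argument becomes a correct mirror image of the paper's. You also need the ball taken in the whole branch $B_x$, not only in the coset $w'xY$. Then $Y_0=N_{4R_r}(w'X)\cap\pi^{-1}(T^w_n)$ and distinct $Y_1^x$ are more than $8R_r$ apart.

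The two routes compare as follows. The paper peels off the outermost layer: its single piece $A_1^w=N_R(\pi^{-1}(S_1^w))$ is handled by induction, and its separated family consists of leaf cosets minus balls, i.e.\ copies of $X$ or $Y$. You instead take the root neighbourhood (coarsely $X$) as the single piece and the truncated branches (depth $n-1$) as the separated family. In both versions the intersections are uniformly bounded annuli. Your left-translation reduction to two isometry types is correct and makes the uniformity bookkeeping cleaner. (Minor: in the paper's indexing $T^w_0=\{w'X\}$, so your base case should be $n=0$, which is also what your own recursion requires.)
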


\begin{proof}
    We argue by induction on $n$. When $n=0$, the statement is trivial since $\pi^{-1}(T_n^{w})$ is either $X$ or $Y$. Assume that the statement holds for $n$. We prove it for $n+1$.
    Set
    \begin{align*}
        S_1^w&:=\{v\in T_n^w:d_T(v,\pi(w))\leq n\},\\
        S_2^w&:=\{v\in T_n^w:d_T(v,\pi(w))= n+1\}.
    \end{align*}
    It suffices to show that for every $r_0>0$ and $k_0>0$, there exists $r\geq r_0$ and $k\geq k_0$ such that the map
    \begin{align*}
        K^{\varepsilon,r_0}_{*}\left(C^*_{L,0}\left( P_{k_0}(\pi^{-1}(T_n^w))\right)\right) \to K^{\varepsilon,r}_{*}\left(C^*_{L,0}\left( P_{k}(\pi^{-1}(T_n^w))\right)\right)
    \end{align*}
    is the zero map for all $w\in W$. Denote $R=3R_{r_0}$ and $A_1^{w}:=N_R(\pi^{-1}(S_1^{w}))$ and $A_2^{w}:=\pi^{-1}(T_{n+1}^{w})\setminus A_1^w$, where $N_R(\pi^{-1}(S_1^{w}))$ is an $r$-neighborhood of $\pi^{-1}(S_1^{w})$ in $\pi^{-1}(T_n^w)$ (see Figure~\ref{fig:decomposition_of_subtree}).
    Observe that $A_1^{w}$ is coarsely equivalent to $\pi^{-1}(S_1^{w})$ with the distortion independent of $w$  and thus the family $\{A_1^w\}_w$ satisfies the quantitative coarse Baum--Connes conjecture by inductive assumption.
    Let $Z$ denote $X$ or $Y$ according to the parity of $n$ and the last letter of $w$. Then $A_2^w$ can be written as a disjoint union $\bigsqcup_{w'} w'Z\setminus  B_{w'Z}(w'e_Z;R)$, which is $2R$-separated by the definition of the distance $d$ on the free product. If we denote by $A_{2,w'}^w:=w'Z\setminus  B_{w'Z}(w'e_Z;R)$, then $A_2^{w}=\bigsqcup A^w_{2,w'}$ is a $2R$-separated disjoint union of a metric family $\{A^w_{2,w'}\}_{w'}$. Each $A_{2,w'}^w$ is coarsely equivalent to $Z$ with the distortion independent of $w$ and $w'$,  and thus the family $\{A_{2,w'}^w\}_{w,w'}$ satisfies the quantitative coarse Baum--Connes conjecture.
     For $s<R_{r_0}$, we consider the metric family $\{N_{s}(A_1^w)\cap N_{s}(A^w_{2,w'})\}_{w,w'}$. Each $N_{s}(A_1^w)\cap N_{s}(A^w_{2,w'})$ is an annulus in $Z$ of radius between $R-s$ and $R+s$. Thus the metric family $\{N_{s}(A_1^w)\cap N_{s}(A^w_{2,w'})\}_{w,w'}$ is uniformly bounded and thus satisfies the quantitative coarse Baum--Connes conjecture. As a result, the lemma follows from Lemma~\ref{one step decomposition}.
\end{proof}

\begin{figure}[h]
\centering
\includegraphics[width=0.8\textwidth]{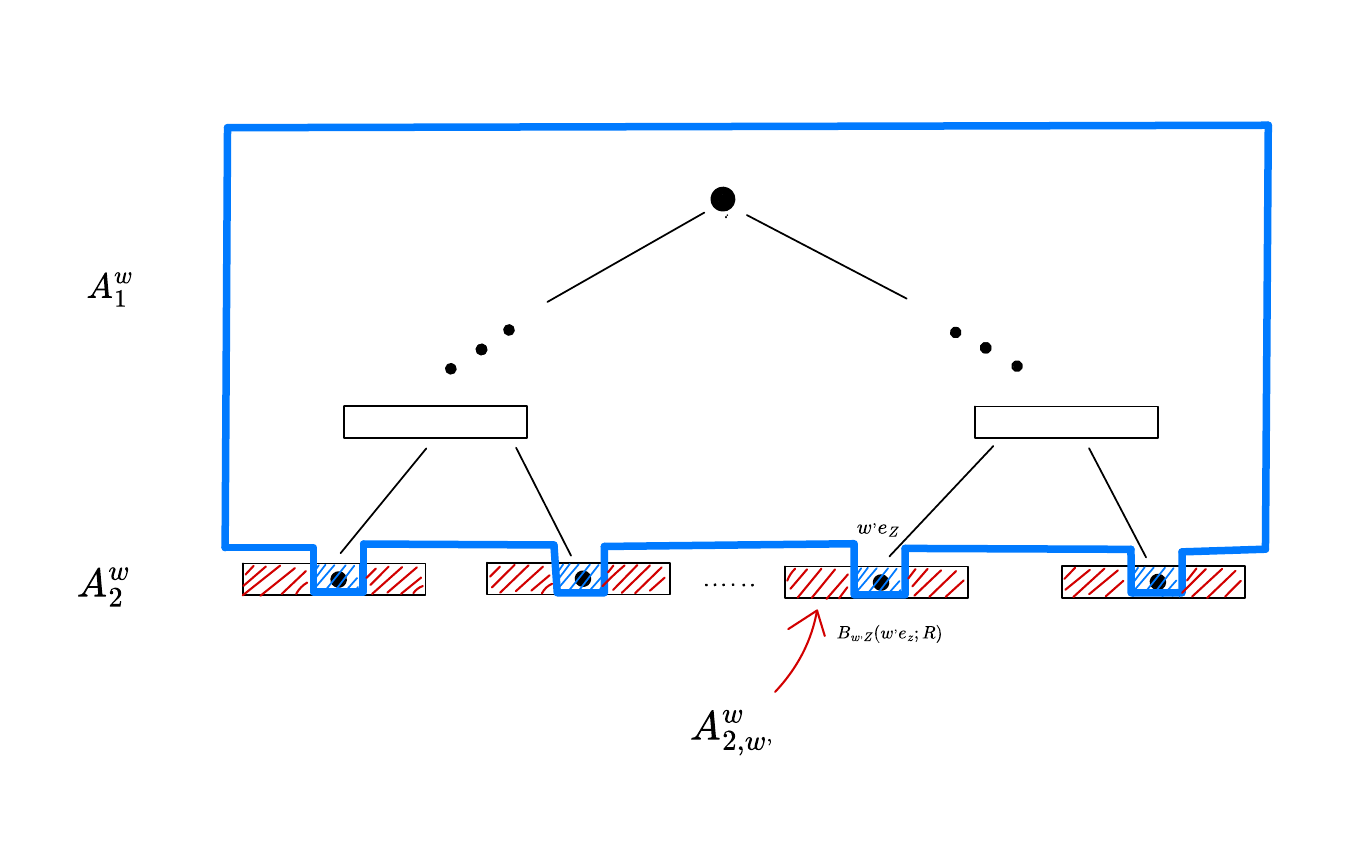}
\vspace{-10mm}
\caption{The decomposition $\pi^{-1}(T_n^w)=A_1^w\cup A_2^w$.}
\label{fig:decomposition_of_subtree}
\end{figure}

\begin{lemma}\label{tree setminus tree}
    Let $m,n$ be non-negative integers and $W\subseteq X*Y$. For each $w\in W$, we denote $S_{m,n}^{w}:=T_{m+n}^w\setminus T_n^w$. Then $\{\pi^{-1}(S_{m,n}^w)\}_{w\in W}$ satisfies the quantitative coarse Baum--Connes conjecture.
\end{lemma}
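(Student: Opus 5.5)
The set $S^w_{m,n}=T^w_{m+n}\setminus T^w_n$ consists of the vertices of $T^w_{m+n}$ at tree distance between $n+1$ and $n+m$ from the root $\pi(w)$. Removing the first $n+1$ levels disconnects it into the subtrees hanging below level $n+1$. Each vertex $v$ at distance exactly $n+1$ from $\pi(w)$ has the form $v=uZ$, with $Z\in\{X,Y\}$ and $u$ ending in the other factor. The part of $S^w_{m,n}$ below $v$ is precisely $T^{ue_Z}_{m-1}$, the subtree of depth $m-1$ rooted at $v$. So the plan is to write
\[
\pi^{-1}(S^w_{m,n})=\bigsqcup_{v}\pi^{-1}\bigl(T^{u_v e_Z}_{m-1}\bigr),
\]
where $v=u_vZ$ runs over the vertices at distance $n+1$ from $\pi(w)$. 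We then treat this as a coarse disjoint union and apply Lemma~\ref{subtree}, with $W'$ the set of all words $u_ve_Z$ obtained as $w$ ranges over $W$ and $v$ over these vertices.

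The key metric point is that distinct pieces are far apart, indeed infinitely far in the relevant sense. Two points in different pieces lie in different branches below $v$ and $v'$, so their maximal common prefix is a word ending strictly above level $n+1$. By the definition of $d$ on $X*Y$, their distance is
\[
d(u_v\,z\cdots,\;u_{v'}\,z'\cdots)\geq \ell(\text{last letter of }u_v)+\ell(\text{last letter of }u_{v'})+\cdots.
\]
This is only bounded below by a constant, and is not large in general, because the letters at the branching level can be close (for instance $x,x'\in X$ with $d_X(x,x')$ small). So the union is not a coarse disjoint union in general. The honest statement would be that $\pi^{-1}(S^w_{m,n})$, with the restricted metric, contains these pieces glued through the removed levels.

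The fix is to decompose one more time, using Lemma~\ref{one step decomposition}, exactly as in the proof of Lemma~\ref{subtree}. Let $S_1$ be the vertices of $S^w_{m,n}$ at level $n+1$, and take $A_1:=N_R(\pi^{-1}(S_1))$ inside $\pi^{-1}(S^w_{m,n})$. Then $A_1$ is a disjoint union of copies $u_vZ$, each thickened by an $R$-neighbourhood that reaches down into the deeper levels. Let $A_2$ be the complement, organised by branches. Distances between distinct copies $u_vZ$ and $u_{v'}Z$ are controlled by comparing the prefixes $u_v$ and $u_{v'}$. I would therefore rather argue by induction on $m$, mirroring Lemma~\ref{subtree}, but peeling off the \emph{deepest} level instead of the top.

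For $m=1$, $\pi^{-1}(S^w_{1,n})$ is a union of copies of $Z$ sitting at level $n+1$. Two such copies $uZ$ and $u'Z$ with $u\neq u'$ differ at some earlier letter. Their distance is at least $\ell(z)+\ell(z')+d(\text{earlier letters})$, which is at least $\ell(z)+\ell(z')$. Decompose into the part within distance $R$ of the base points $ue_Z$ and the rest. The rest, $\bigsqcup (uZ\setminus B(ue_Z;R))$, is $2R$-separated and uniformly coarsely equivalent to $Z$. The near part is a union of uniformly bounded balls, but these balls may be close to one another for different $u$. That is the main obstacle: controlling the near part, which lies close to the removed upper levels. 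I would handle it by noting that the near part is coarsely a subset of $\pi^{-1}(T^w_n)$ enlarged slightly. I would then try to show it is uniformly coarsely equivalent to a family of the type covered by Lemma~\ref{subtree}, by mapping each ball $B(ue_Z;R)$ to its root point $u$, which lies in $\pi^{-1}(T^w_n)$. The intersection terms are again annuli, hence uniformly bounded. The inductive step from $m$ to $m+1$ uses the same two-piece decomposition, with $A_1$ the $R$-neighbourhood of levels $n+1,\dots,n+m$ and $A_2$ the far parts of level $n+m+1$, and concludes via Lemma~\ref{one step decomposition}.
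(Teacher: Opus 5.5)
Your induction on $m$ arranges the argument differently from the paper. Its inductive step is fine: you add the deepest level, whose far parts are $2R$-separated copies of $Z\setminus B_Z(e_Z;R)$, the intersections are annuli, and you conclude with Lemma~\ref{one step decomposition}. The gap is the step you yourself flag in the base case $m=1$, namely the near part of level $n+1$.

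The root-point map $uz\mapsto u$ is a uniform coarse equivalence. But for $n\ge 1$ its image is only $\pi^{-1}(T^w_n\setminus T^w_{n-1})$, the bottom level of the removed tree, not $\pi^{-1}(T^w_n)$. Knowing that the near part is \emph{coarsely a subset of} $\pi^{-1}(T^w_n)$ does not let you invoke Lemma~\ref{subtree}, because the quantitative conjecture does not pass to subspaces; this is exactly the difficulty the paper stresses. The family you actually have to control is $\{\pi^{-1}(S^w_{1,n-1})\}_w$, which is not of the form treated in Lemma~\ref{subtree}. As written, the base case, and hence the whole induction, is not closed.

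There are two ways to close it.

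\emph{The paper's route.} The paper inducts on $n$ instead of $m$. It splits off the shell $C_0^w=N_R(\pi^{-1}(T^w_n))\setminus\pi^{-1}(T^w_n)$, which is uniformly coarsely equivalent to $\pi^{-1}(S^w_{1,n-1})$ and so is covered by the inductive hypothesis; when $n=0$ it is a copy of $X$ or $Y$. The remaining $2R$-separated deep pieces are uniformly coarsely equivalent to full subtrees and are handled by Lemma~\ref{subtree}. So the recursion sits on the thin shell next to the removed tree, which is precisely the piece you left open.

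\emph{Keeping your route.} You can instead supply the missing density. Appending the base point of the other factor to a word moves it one level down at cost $1+\ell(e)=1$. Hence $\pi^{-1}(T^w_n\setminus T^w_{n-1})$ is $n$-dense in $\pi^{-1}(T^w_n)$, and the isometric inclusion is a uniform coarse equivalence with constant independent of $w$. That is what your near-part claim needs.

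The same observation gives more: for $m\ge 1$, $\pi^{-1}(S^w_{m,n})$ is $(n+1)$-dense in $\pi^{-1}(T^w_{m+n})$. Once this is noticed, the lemma follows from Lemma~\ref{subtree} directly, with no induction and no further use of Lemma~\ref{one step decomposition}.
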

\begin{proof}
By induction on $n$, we prove that for every $m\in \mathbb{N}$, $r_0>0$ and $k_0>0$, there exist $r\geq r_0$ and $k\geq k_0$ such that the map
    \begin{align*}
        K^{\varepsilon,r_0}_{*}\left(C^*_{L,0}\left( P_{k_0}(\pi^{-1}(S_{m,n}^w))\right)\right) \to K^{\varepsilon,r}_{*}\left(C^*_{L,0}\left( P_{k}(\pi^{-1}(S_{m,n}^w))\right)\right)
    \end{align*}
    is the zero map, for all $w\in W$ for sufficiently small $\varepsilon>0$.
    Without loss of generality, we may assume that $w=w'x'$ for some $x'\in X$, and set $R=3R_{r_0}$.
    First, we assume $n=0$ then $T^w_0=w'X$. We decompose $\pi^{-1}(T_m^w\setminus w'X)$ as
    \begin{align*}
        \pi^{-1}(T_m^w\setminus (wX))=\left(N_R(w'X)\setminus w'X \right)\cup \left( \pi^{-1}(T_m^w)\setminus N_R(w'X)\right).
    \end{align*}
    See the Figure~\ref{fig:decomposition_of_S}.
    Here $N_R(w'X)$ is the $R$-neighborhood of the coset $w'X$ in $\pi^{-1}(T_m^w)$. If we denote by $A_0^w:=\left(N_R(w'X)\setminus w'X \right)$, then $A_0^w$ can be written as the disjoint union $$A_0^w=\bigsqcup_{y\in Y} B_{\pi^{-1}(T_{m-1}^{wy})}(we_Y;R).$$
    Note that the map
    \begin{align*}
        \left(N_R(w'X)\right)\setminus w'X =\bigsqcup_{y\in Y} B_{\pi^{-1}(T_{m-1}^{wy})}(we_Y;R) \to w'X:~ wy\mapsto w
    \end{align*}
    is a coarse equivalence with uniform distortion independent of $w$. Therefore, $\left\{A_0^w\right\}_w$ satisfies the quantitative coarse Baum--Connes conjecture.
    
    On the other hand, $\pi^{-1}(T_m^w)\setminus w'X$ is the disjoint union $\bigsqcup_{x} \pi^{-1}(T_{m-1}^{wx})$. For any $w_i\in \pi^{-1}(T_{m-1}^{wx_i})$, $(i=0,1)$ with distinct $x_0$ and $x_1$ in $X$, we have
    \begin{align*}
        d(w_0,w_1)=d(w_0,w'x_0e_Y)+d_X(x_0,x_1)+d(w'x_1e_Y,w_1).
    \end{align*}
    Therefore, by denoting $A_{1,x}^w:=\pi^{-1}(T_{m-1}^{w'x})\setminus B_{\pi^{-1}(T_{m-1}^{w'x})}(w'xe_Y;R)$, we have $d\left(A_{1,x_0}^w,A_{1,x_1}^w\right)> 2R$ and so
    \begin{align*}
        \pi^{-1}(T_m^w)\setminus N_R(w'X)=\bigsqcup_g A_{1,g}^w
    \end{align*}
     is a $2R$-separated disjoint union.
    For every $w\in W$ and $x\in X$, $A_{1,x}^w$ is coarsely equivalent to $\pi^{-1}(T_{m-1}^{w'x})$ with the uniform distortion independent of $w$ and $x$. Therefore, the family $\{A_{1,x}^w\}_{w,x}$ uniformly satisfies the quantitative coarse Baum--Connes conjecture by Lemma~\ref{subtree}. Next, we consider intersections $ \left\{ N_s(A_0^w) \cap N_s(A_{1,x}^w)\right\}_{w,x}$ for $s\leq R_{r_0}$. Notice that $N_s(A_0^w) \cap N_s(A_{1,x}^w)$ is an annulus centered at $wxe_Y$ with radius between $R-s$ to $R+s$. Therefore the metric family $ \left\{ N_s(A_0^w) \cap N_s(A_{1,x}^w)\right\}_{w,x}$ is uniformly bounded.  
    Now the assertion follows from Lemma~\ref{one step decomposition}.
    
    Next, we assume that the assertion holds for $n-1$. In this case, we decompose
    \begin{align*}
        \pi^{-1}(S_{m,n}^w)=\left(N_R(\pi^{-1}(T_n^w))\setminus \pi^{-1}(T_n^w)\right)\cup \left(\pi^{-1}(S_{m,n}^w)\setminus N_R(\pi^{-1}(T_n^w)) \right).
    \end{align*}
    Now, by the same argument as the case of $n=0$, $C_0^w:=N_R(\pi^{-1}(T_n^w))\setminus \pi^{-1}(T_n^w)$ is coarsely equivalent to $\pi^{-1}(T_n^w\setminus T_{n-1}^w)$ uniformly on $w$. Therefore the family $\{C_0^w\}_w$ satisfies the quantitative coarse Baum--Connes conjecture by the inductive assumption for the case $m=1$. On the other hand, again as well as the case for $n=0$, $\pi^{-1}(S_{m,n}^w)\setminus N_R(\pi^{-1}(T_n^w)) $ can be written as a $2R$-separated disjoint union
    \begin{align*}
        \pi^{-1}(S_{m,n}^w)\setminus N_R(\pi^{-1}(T_n^w))  =\bigsqcup_{v\in T_n^w\setminus T_{n-1}^w} C_{1,v}^w ,
    \end{align*}
    where $C_{1,v}^w:=\pi^{-1}(T_m^{v})\setminus B_{\pi^{-1}(T_m^{v})}(v;R)$
     and the metric family $\{C_{1,w'}^w\}_{w,w'}$ satisfies the quantitative coarse Baum--Connes conjecture, since $C_{1,v}^w$ is coarsely equivalent to $\pi^{-1}(T_m^{v})$ uniformly on $w$ and $v$. Again for every $s\leq R_{r_0},$ the intersections $\{N_s(C_0^w)\cap N_s(C_{1,v}^w)\}_{w,v}$ is uniformly bounded. The assertion follows from Lemma~\ref{one step decomposition}.
\end{proof}

\begin{figure}[h]
\centering
\includegraphics[width=0.8\textwidth]{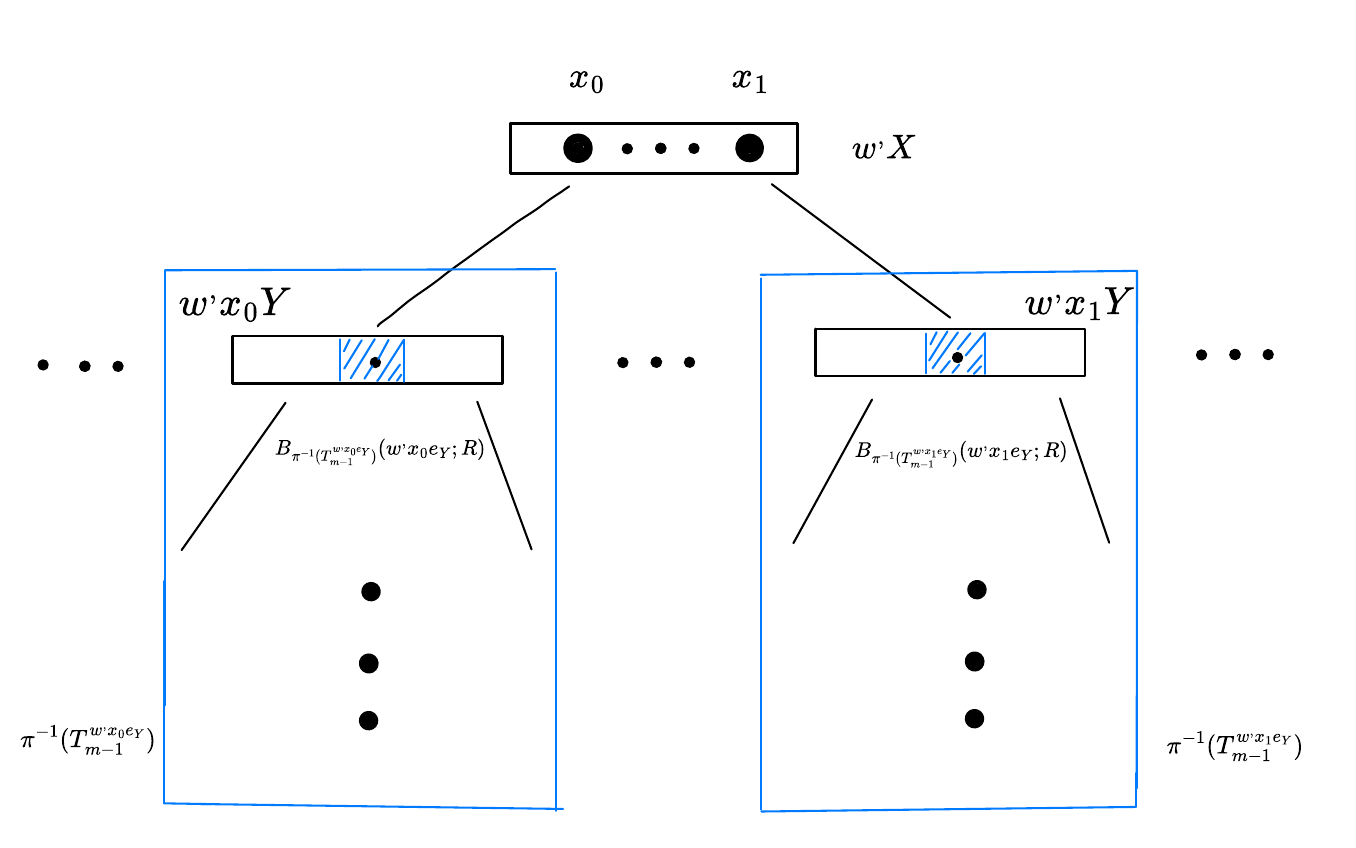}
\caption{The decomposition $\pi^{-1}(T_m^{w})=\left(N_r(w'X)\setminus w'X \right)\cup \left( \pi^{-1}(T_m)\setminus N_r(w'X)\right)$.}
\label{fig:decomposition_of_S}
\end{figure}

Now, we are ready to prove our main result. 

\begin{theorem}

 If $X$ and $Y$ are uniformly locally finite metric spaces which satisfy the quantitative coarse Baum--Connes conjecture, then their free product $X*Y$ satisfies the quantitative coarse Baum--Connes conjecture, i.e.~for all $k>0$ and $r>0$ the map
    \begin{align*}
        K^{\varepsilon,r}_{*}(C^*_{L,0}(P_k(X*Y))) \to K^{\varepsilon,r'}_{*}(C^*_{L,0}(P_{k'}(X*Y)))
    \end{align*}
    is the zero map for some $k'>k$, $r'\geq r$ and for all sufficiently small $\varepsilon$.
\end{theorem}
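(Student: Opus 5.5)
The plan is to decompose $X*Y$ once more via the Bass--Serre tree and apply Lemma~\ref{one step decomposition}, with the two pieces controlled by Lemmas~\ref{subtree} and~\ref{tree setminus tree}. Fix $k$ and $r$, let $R_r$ be the function fixed before Lemma~\ref{one step decomposition}, and set $R=3R_r$. Choose an integer $N$ with $N\geq 4R$; the reason for this choice is explained below. We work on the tree $T$ with root $\phi$, for which $\pi^{-1}(\phi)=\{\phi\}$, and use the distance $d_T(\cdot,\phi)$. The idea is to cut $T$ into concentric annuli of width about $N$, arranged so that annuli of the same parity are far apart in $X*Y$.

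For $j\geq 0$, let $L_j$ be the set of vertices $v$ with $jN\leq d_T(v,\phi)<(j+1)N$. The preimage $\pi^{-1}(L_j)$ is a disjoint union, indexed by the vertices $v$ at level $jN$, of pieces $\pi^{-1}(T^{v}_{N-1})$; here each $T^v$ is the subtree rooted at $v$, as in the definition of $T^w_n$.

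\begin{itemize}
\item[(a)] Let $Y_0$ be the union of the $N/4$-neighbourhoods of the sets $\pi^{-1}(L_j)$ over even $j$, and $Y_1$ the same over odd $j$.
\item[(b)] Two different components of $Y_0$ are separated by at least $N/2$ in $X*Y$. There are two cases: the pieces come from distinct subtrees at the same level, or from levels $j$ and $j+2$. In the first case the path between them passes through the root coset of length $\geq 1$ plus the excursions. In the second case it passes through a full odd annulus, whose letters each contribute $1+\ell\geq 1$ to the metric. Hence the components are $3R$-separated once $N\geq 12R$, so I will actually take $N=12R$.
\item[(c)] Each component is $N/4$-coarsely equivalent to some $\pi^{-1}(T^v_{N-1})$, with uniform distortion. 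This uses the fact that enlarging by $N/4$ only adds boundary pieces of neighbouring cosets, and these are coarsely absorbed as in the proof of Lemma~\ref{tree setminus tree}. By Lemma~\ref{subtree} applied with $W$ the set of words at level $jN$, the family of components satisfies the quantitative coarse Baum--Connes conjecture uniformly.
\item[(d)] The intersections $N_s(Y_0^{j})\cap N_s(Y_1^{j'})$ for $s\leq R_r$ are, uniformly, coarsely equivalent to sets $\pi^{-1}(S^{w}_{m,n})$ with fixed $m$ and $n$ of order $N/4$. Indeed, near the interface between levels the intersection is a collar consisting of a band of tree levels around a cut. Lemma~\ref{tree setminus tree} then gives the conjecture for this family.
\end{itemize}

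Applying Lemma~\ref{one step decomposition} yields $k'\geq k$ and $r'\geq r$ such that the map $K^{\varepsilon,r}_*(C^*_{L,0}(P_k(X*Y)))\to K^{\varepsilon,r'}_*(C^*_{L,0}(P_{k'}(X*Y)))$ is zero. Then Lemma~\ref{characterization of qcbc} together with Lemma 3.22 of \cite{zhang2024QuantitativeCoarseBaumConnes} gives the conjecture.

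The main obstacle is step (d), and the geometric bookkeeping in (b)--(c) more generally. The cut between levels passes through the middle of cosets $wZ$, so neighbourhoods in the metric $d$ are not unions of tree levels. I would first try to place the cuts using balls $B_{wZ}(we_Z;\cdot)$ as in the proofs of Lemmas~\ref{subtree} and~\ref{tree setminus tree}. Concretely, I would redefine $Y_0$ and $Y_1$ as unions of level sets together with, or minus, balls of radius $R$ around the base points $we_Z$. Each intersection then becomes an annulus inside a coset, which is uniformly bounded, union pieces of the form $\pi^{-1}(S_{m,n}^w)$. If the annuli cannot be made uniformly bounded, I would fall back on Lemma~\ref{tree setminus tree} with a different pair $(m,n)$.
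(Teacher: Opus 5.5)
\textbf{There is a genuine gap in step (b)}, in the case of two pieces at the same level.

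Let $wX$ be a vertex at level $jN-1$ (the $Y$-case is symmetric). Its children at level $jN$ are the vertices $wxY$, $x\in X$, and these are pairwise at distance $2$ in $T$. The pieces $\pi^{-1}(T^{wxe_Y}_{N-1})$ and $\pi^{-1}(T^{wx'e_Y}_{N-1})$, rooted at $wxY$ and $wx'Y$, contain the base points $wxe_Y$ and $wx'e_Y$. Their distance in $X*Y$ is at most $d_X(x,x')+2$.

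For a fixed pair $x\neq x'$ this bound does not depend on $N$ (think of neighbouring elements when $X$ is a group). So no choice of $N$ makes $Y_0$ a $3R_r$-separated disjoint union of the sets in (c). After taking the $N/4$-neighbourhoods of (a), these pieces even overlap. Your justification that the path ``passes through the root coset'' only gives this bounded lower bound: at an infinite-valence vertex, the separation of branches is governed by $d_X$ or $d_Y$, not by tree depth. Hence Lemma~\ref{one step decomposition} cannot be applied to your cover.

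Taking the whole level set $\pi^{-1}(L_j)$ as one piece does not fix this. The separation between levels $j$ and $j+2$ is then fine. But the pieces form a family of tree-minus-tree sets with unbounded inner depth $jN$, and Lemma~\ref{tree setminus tree} gives uniformity only for fixed $(m,n)$.

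\textbf{The missing idea} is to group the subtrees of an annulus by their common ancestor half an annulus higher up, which is what the paper does. On $A_n=\{s\in T: nR\le d_T(s,\phi)<(n+1)R\}$ it declares $x\sim y$ iff $\frac12\bigl(d_T(x,\phi)+d_T(y,\phi)-d_T(x,y)\bigr)\ge (n-\frac12)R$. This gives two things:
\begin{enumerate}
\item[(i)] Inequivalent vertices of $A_n$ satisfy $d_T(x,y)>R$. Since $\pi$ is a contraction, their preimages are $R$-separated in $X*Y$.
\item[(ii)] Each class consists of the descendants in $A_n$ of a single vertex at depth about $(n-\frac12)R$. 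It is therefore a set of the form $T^{w}_{n'+m}\setminus T^{w}_{n'}$ with $n'\approx R/2$ and $m\approx R$, independent of $n$ and of the class.
\end{enumerate}
So the pieces are controlled by Lemma~\ref{tree setminus tree} rather than Lemma~\ref{subtree}. The intersections reduce to one-level slices $\pi^{-1}(T^{w'}_{n'+m}\setminus T^{w'}_{n'+m-1})$, again covered by Lemma~\ref{tree setminus tree}.

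This branching issue is the real difficulty, not the placement of cuts inside cosets that you flag as the main obstacle. The extra $N/4$-neighbourhoods in (a) are also unnecessary, since Lemma~\ref{one step decomposition} already passes to neighbourhoods of the pieces.
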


\begin{proof}
We set $R:=3R_r$ and decompose the Bass--Serre tree $T$ into
    \begin{align*}
        T=\bigsqcup C_{0,j} \cup \bigsqcup C_{1,j'}
    \end{align*}
    as follows. First, we decompose the tree into annuli
    \begin{align*}
        A_{n}:=\{s\in T:nR\leq d_T(s,\phi)< (n+1)R\}.
    \end{align*}
 Then, we further decompose each $A_n$ into the equivalence classes $\{B_{n,j}\}_j$ according to the equivalence relation $\sim$ defined by
    \begin{align*}
        x\sim y \iff \frac{d_T(x,\phi)+d_T(y,\phi)-d_T(x,y)}{2}\geq \left(n-\frac{1}{2}\right)R.
    \end{align*}
    By arranging $\{B_{n,j}\}$, we obtain metric families $\{C_{0,j}\}_j$ and $\{C_{1,j'}\}_{j'}$ as follows
    \begin{align*}
        \{C_{0,j}\}_j&:=\{B_{n,j}:~n=0,2,4,\cdots, ~j=1,2,\cdots\}\\
        \{C_{1,j'}\}_{j'}&:=\{B_{n,j}:~n=1,3,5,\cdots, ~j=1,2,\cdots\}.
    \end{align*}
    Then the metric families $\{C_{0,j}\}_j$ and $\{C_{1,j'}\}_{j'}$ are $R$-separated and each $C_{i,j}$ has the form $C_{i,j}= T^{w_{i,j}}_{n+m}\setminus T^{w_{i,j}}_{n}$, where $m=R$ and $n=\frac{1}{2}R$. Thus the metric families $\{\pi^{-1}(C_{0,j})\}_j$ and $\{\pi^{-1}(C_{1,j'})\}_{j'}$ satisfies the quantitative coarse Baum--Connes conjecture by the previous lemma. Again by Lemma~\ref{one step decomposition}, it suffices to show that the metric family $\{N_{s}(\pi^{-1}(C_{0,j}))\cap N_{s}(\pi^{-1}(C_{1,j'}))\}_{j,j'}$ satisfies the quantitative coarse Baum--Connes conjecture for all $s\leq \frac{R}{3}$. The intersection $N_{s}(\pi^{-1}(C_{0,j}))\cap N_{s}(\pi^{-1}(C_{1,j'}))$ is nonempty only if there exists an $n$ such that one of $C_{0,j}$ and $C_{1,j'}$ is contained in $A_n$ and the other is contained in $A_{n+1}$ for some $n$.
    If $C_{0,j}\subset A_n$ and $C_{1,j'}\subset A_{n+1}$, then 
    \begin{align*}
        N_{s}(\pi^{-1}(C_{0,j}))\cap N_{s}(\pi^{-1}(C_{1,j'})) =\bigcup_w B_{X*Y}(w;s),
    \end{align*}
    where $w$ runs over the set
    \begin{align}\label{set of w}
    \left\{ w\in X*Y : \pi(w)\in C_{1,j'} \text{ and } d_T(\pi(w),A_n)= 1 \right\}.
\end{align}
    The space $\bigcup_w B_{X*Y}(w;s)$ is uniformly coarsely equivalent to the set \eqref{set of w} with distortions independent of $j,j'$. And the set \eqref{set of w} is equal to $\pi^{-1}(T_{n+m}^{w'}\setminus T_{n+m-1}^{w'})$
    for some $w'\in X*Y$ corresponding to the vertex at the base of $C_{0,j}$.
    Thus the family $\{N_{s}(\pi^{-1}(C_{0,j}))\cap N_{s}(\pi^{-1}(C_{1,j'}))\}_{j,j'}$ satisfies the quantitative coarse Baum--Connes conjecture.
\end{proof}



\bibliographystyle{alpha}
\bibliography{ref}



\end{document}